\documentclass[12pt, reqno]{amsart}

\makeatletter
\def\subsection{\@startsection{subsection}{2}%
  \z@{.5\linespacing\@plus.7\linespacing}
{.5\baselineskip}%
  {\normalfont\centering\scshape}%
}
\makeatother

\usepackage[dvipsnames]{xcolor}
\usepackage{latexsym,amsfonts,amssymb,epsfig,verbatim}
\usepackage{amsmath}
\usepackage{latexsym, graphics, textcomp}
\usepackage[pagebackref,breaklinks,hidelinks,unicode]{hyperref}
\usepackage{mathtools}
\usepackage{amsthm}
\usepackage{pinlabel}
\usepackage{braket}
\usepackage{tikz}
\usepackage[left=30mm,right=30mm,top=35mm,bottom=35mm]{geometry}
\usepackage{dirtytalk}

\setlength{\parindent}{0in}
\setlength{\parskip}{2ex}

\usepackage[all,2cell,dvips]{xy}

\newcommand{\nc}{\newcommand}
\nc{\dmo}{\DeclareMathOperator}
\nc{\nt}{\newtheorem}

\nt{theorem}{Theorem}[section]
\nt{thm}[theorem]{Theorem}
\nt{lemma}[theorem]{Lemma}
\nt{prop}[theorem]{Proposition}
\nt{question}[theorem]{Question}
\nt{cor}[theorem]{Corollary}
\nt{fact}[theorem]{Fact}
\nt{problem}[theorem]{Problem}
\nt{conjecture}[theorem]{Conjecture}
\nt{Definition}[theorem]{Definition}
\nt{Claim}{Claim}
\nt*{claim}{Claim}
\theoremstyle{definition}
\nt*{remark}{Remark}

\nc{\Z}{\mathbb{Z}}
\nc{\R}{\mathbb{R}}
\nc{\Q}{\mathbb{Q}}
\nc{\N}{\mathbb{N}}

\nc{\margin}[1]{\marginpar{\tiny #1}}

\nc{\p}[1]{\smallskip\noindent{{\bf #1}}}

\begin{document}

\title{Coarse Kernels of Group Actions}

\author{Tejas Mittal}

\begin{abstract}
In this paper, we study the coarse kernel of a group action, namely the normal subgroup of elements that translate every point by a uniformly bounded amount. We give a complete algebraic characterization of this object. We specialize to $\mathrm{CAT}(0)$ spaces and show that the coarse kernel must be virtually abelian, characterizing when it is finite or cyclic in terms of the curtain model. As an application, we characterize the relation between the coarse kernels of the action on a $\mathrm{CAT}(0)$ space and the induced action on its curtain model. Along the way, we study weakly acylindrical actions on quasi-lines.
\end{abstract}

\maketitle
\section{Introduction}
The goal of this paper is to study certain stabilizers of group actions. Within the framework of groups acting on metric spaces, one is often interested in the properties that depend solely on the quasi-isometry type of the action. In other words, the object of interest must remain invariant under $G$-equivariant quasi-isometries. Consequently, the kernels of actions are not ideal objects of investigation. Instead, it is more appropriate to consider their coarse versions, which possess this stability feature. In the current work, we study the \emph{coarse kernel} of a group action, namely the subgroup consisting of elements that translate every point by a uniformly bounded amount.

\begin{Definition}
\label{defG_X}
Let $G$ be a group acting on a metric space $X$. Define the \textbf{coarse kernel} of the action to be ${G_X := \{g\in G
   \mid  \exists C>0 \text{ such that }d(g\cdot x,x) \leq C \text{ }\forall x\in X \}.}$\end{Definition}
   
 This coarse kernel then serves as a suitable analog to the kernel of a group action. We remark that our coarse kernels are different from the ones of \cite{Leitner2023}, where the focus is on the analogue of kernels for coarse groups. Our first result is to show that, quite surprisingly, for a geometric action, the coarse kernel $G_X$ can be described in purely algebraic terms. In the theorem below, $Z(H)$ denotes the center of the group $H$.
  \begin{theorem}
\label{corthmA}
    Let $G$ be a group acting properly discontinuously and co-boundedly on a metric space $X$. Then, $G_X = \bigcup \{Z(H) \mid |G/H|   <\infty \}$. 
\end{theorem}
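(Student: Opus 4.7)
The key observation is that, under these hypotheses, $G_X$ coincides with the \emph{FC-center} of $G$, i.e.\ the set of elements with finitely many conjugates. Once this identification is made, the stated equality reduces to an elementary algebraic rewriting.

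To prove $G_X$ is contained in the FC-center, I would fix a basepoint $x_0 \in X$ and take $g \in G_X$ with uniform displacement bound $C$. Since the action is by isometries, for every $g' \in G$ one has $d(g'^{-1}gg' \cdot x_0,\, x_0) = d(g \cdot g'x_0,\, g'x_0) \leq C$. Every conjugate of $g$ therefore sends $x_0$ into the set $\{h \in G : d(hx_0,x_0) \leq C\} \cdot x_0$, which is finite by proper discontinuity, so $g$ has only finitely many conjugates.

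The reverse inclusion uses cocompactness. If $g$ has finitely many conjugates, set $R := \max_{g' \in G} d(g'^{-1}gg' \cdot x_0,\, x_0) < \infty$, so $d(g \cdot g'x_0,\, g'x_0) \leq R$ on the whole orbit $G \cdot x_0$. Pick $D$ such that $G \cdot x_0$ is $D$-dense in $X$ (cocompactness). For arbitrary $x \in X$, choose $g' \in G$ with $d(x, g'x_0) \leq D$; the triangle inequality gives $d(gx,x) \leq R + 2D$, hence $g \in G_X$.

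It remains to identify the FC-center with $\bigcup \{Z(H) : |G/H| < \infty\}$, which is a short orbit-stabilizer exercise: if $g \in Z(H)$ for a finite-index $H$, then $H \subseteq C_G(g)$, so $C_G(g)$ has finite index and $g$ has finitely many conjugates; conversely, whenever $g$ has finitely many conjugates, $H := C_G(g)$ is finite-index and contains $g$ in its center. The only mildly delicate point in the whole argument is the precise form of ``properly discontinuous''---one needs that $\{h \in G : d(hx_0,x_0) \leq C\}$ is finite for every $C$, which is the natural convention in this coarse-geometric setting and is the sole place where properness, as opposed merely to freeness or faithfulness of the action, is used.
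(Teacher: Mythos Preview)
Your proof is correct and is a genuinely more direct route than the paper's. The paper obtains this statement as a corollary of a more general result (Theorem~\ref{ThmA}) about coarse stabilizers $G_Y$ of subsets $Y$ coarsely coinciding with orbits of finitely generated subgroups $H\leq G$: it introduces the \emph{coarse fixed set} $CFix_X(H)$, proves that $C_G(H)$ acts geometrically on it, and then specializes to $Y=X$, $H=G$ to extract the equality with $\bigcup C_G(H')$ before passing to centers. Your argument bypasses this machinery entirely by identifying $G_X$ with the FC-center in two short strokes, using only the conjugation identity $d(g'^{-1}gg'\cdot x_0,x_0)=d(g\cdot g'x_0,g'x_0)$ together with proper discontinuity and coboundedness. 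At the core both proofs rest on the same observation---conjugates of a bounded-displacement element all move the basepoint a bounded amount, hence form a finite set---but the paper packages this inside a subsequence-extraction argument establishing cocompactness of $C_G(g)$ on $CFix_X(\langle g\rangle)$, whereas you invoke it in one line. What the paper's approach buys is the more general statement about arbitrary $G_Y$; what yours buys is a self-contained proof of the corollary with essentially no overhead.
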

      
   In particular, if $G$ is abelian, then $G_X= G$, which can be shown to hold for all co-bounded isometric actions. Similarly, in the presence of negative curvature, for instance, when a hyperbolic group acts on its Cayley graph, one expects the coarse kernel to be finite. We focus on $\mathrm{CAT}(0)$ spaces, which interpolate between the two cases above, and show the following.
\begin{theorem}
\label{ThmC}
Let $X$ be a co-bounded $\mathrm{CAT}(0)$ space, and let $G$ act on $X$ properly discontinuously.  Suppose that $G$ has an unbounded orbit in the curtain model $X_D.$ Then, $G_X$ is virtually cyclic.
Moreover, if an orbit of $G$ in $X_D$ is not a quasi-line either, then $G_X$ is finite.
\end{theorem}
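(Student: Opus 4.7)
The plan is to combine the algebraic characterization $G_X = \bigcup \{Z(H) : |G/H| < \infty\}$ from Theorem~\ref{corthmA} with the CAT(0) dynamics of rank-one isometries. Since the comparison map $X \to X_D$ is coarsely Lipschitz, each element of $G_X$ also has uniformly bounded displacement on $X_D$. Two structural inputs from the theory of the curtain model will drive the argument: first, that under our hypotheses an unbounded $G$-orbit in $X_D$ produces a rank-one element $g \in G$ whose axis $\alpha$ in $X$ has a well-defined pair of endpoints $\partial\alpha \subset \partial X$; and second, that the setwise stabilizer $\mathrm{Stab}_G(\partial\alpha)$ is virtually cyclic, by proper discontinuity together with the rigidity of rank-one axes.

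For the virtual cyclicity of $G_X$, fix one such rank-one $g$. Given $\gamma \in G_X$, Theorem~\ref{corthmA} produces a finite-index subgroup $H \le G$ with $\gamma \in Z(H)$. Some positive power $g^n$ lies in $H$, so $\gamma$ centralizes $g^n$. Since $g^n$ shares its axis and endpoints at infinity with $g$, any element centralizing $g^n$ preserves $\partial\alpha$ setwise, forcing $\gamma \in \mathrm{Stab}_G(\partial\alpha)$. As this containment uses the same $g$ for every $\gamma$, we obtain $G_X \subseteq \mathrm{Stab}_G(\partial\alpha)$, which is virtually cyclic.

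For the moreover part, I will upgrade the non-quasi-line assumption to the existence of two rank-one elements $g_1, g_2 \in G$ with disjoint endpoint pairs $\partial\alpha_1, \partial\alpha_2$. By the standard classification of isometric actions on the hyperbolic curtain model, the only unbounded non-lineal case surviving the cocompactness of $G \curvearrowright X$ should be the non-elementary one, and a ping-pong argument then produces two loxodromics on $X_D$ with disjoint fixed pairs, lifting to the desired rank-one elements in $X$. Given any $\gamma \in G_X$ with $\gamma \in Z(H)$ as above, choose $n, m$ so that $g_1^n, g_2^m \in H$; then the argument of the previous paragraph applied to each $g_i$ yields $\gamma \in \mathrm{Stab}_G(\partial\alpha_1) \cap \mathrm{Stab}_G(\partial\alpha_2)$. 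An infinite-order element of this intersection would have a power $k^N$ translating along $\alpha_1$, making $k^N$ rank-one with fixed boundary pair $\partial\alpha_1$; but $k^N$ would also setwise-preserve the disjoint $\partial\alpha_2$, contradicting its North–South dynamics on $\partial X$. Hence the intersection is finite, and so is $G_X$.

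The main obstacle I anticipate is not this centralizer-and-intersection algebra, which is essentially formal given Theorem~\ref{corthmA}, but the two geometric inputs from curtain-model theory: promoting an unbounded orbit on $X_D$ to a rank-one element, and promoting a non-quasi-line unbounded orbit to two such elements with disjoint endpoints. These existence statements are the structural content that $X_D$ is designed to supply, and will be imported either from the work of Petyt–Spriano–Zalloum on the curtain model or from the weakly acylindrical actions on quasi-lines machinery developed earlier in the paper.
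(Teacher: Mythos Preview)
Your argument has a genuine gap at its very first step: you invoke Theorem~\ref{corthmA} to write $G_X = \bigcup\{Z(H) : |G/H| < \infty\}$, but that theorem requires $G$ to act co-boundedly on $X$. In Theorem~\ref{ThmC} the hypothesis is only that $X$ is a co-bounded space (i.e.\ $\mathrm{Isom}(X)$ acts co-boundedly) and that $G$ acts properly discontinuously; the paper explicitly stresses that $G$ need not act co-compactly. So the algebraic characterization is simply unavailable here, and every subsequent step---choosing a power $g^n \in H$, concluding $\gamma$ centralizes it---collapses. You repeat this oversight in the ``moreover'' part when you speak of ``the cocompactness of $G \curvearrowright X$'' to rule out the horocyclic case; there is no such cocompactness.

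The paper's proof avoids Theorem~\ref{corthmA} entirely and works directly on $X_D$. The key observation is that $G_X \subseteq G_{X_D}$ and that $G_{X_D}$ is a \emph{normal} subgroup of $G$ fixing $\partial X_D$ pointwise. Gromov's classification then forces the $G_{X_D}$-action on $X_D$ to be bounded, horocyclic, or lineal; horocyclic is excluded because $X_D$ is co-bounded (this uses co-boundedness of $X$, not of the $G$-action), and lineal would force $|\partial X_D| = 2$ and hence make the full $G$-orbit a quasi-line. So in the non-quasi-line case $G_{X_D}$ has a bounded orbit, and then normality of $G_{X_D}$ together with weak acylindricity of the $G$-action on $X_D$ and the existence of a far-away orbit point gives $|G_{X_D}| < \infty$. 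In the quasi-line case the paper applies Proposition~\ref{Theorem D} to conclude $G$ itself is virtually $\Z$. None of this touches centralizers in $G$ or rank-one axes in $X$.
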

The \emph{curtain model} was introduced by Petyt, Spriano, and Zalloum in \cite{PETYT2024109742}, and is a hyperbolic space $X_D$ associated to a $\mathrm{CAT}(0)$ space $X$ that \say{encodes the hyperbolicity of $X$}. This allows us to make precise the intuition that a group acting on a $\mathrm{CAT}(0)$ space that is hyperbolic enough needs to have a small coarse kernel. We remark that $G$ does not need to act co-compactly on $X$. When we add that assumption, we obtain a significantly stronger characterization of the coarse kernel. Specifically, we are able to identify the algebraic structure of the coarse kernel $G_X$ based on certain geometric constrains on $X$.
\begin{theorem}
\label{ThmC'}
Let $X$ be a $\mathrm{CAT}(0)$ space, and let $G$ act on $ X$ geometrically. Then,\\
    (i) $G_X$ is virtually $\Z^n$ for some $n \in \Z_{\geq 0}$.\\
    (ii) If moreover $X_D$ is unbounded, then $G_X$ is virtually cyclic.\\
    (iii) If furthermore $X_D$ is not a quasi-line, then $G_X$ is the largest finite normal subgroup of $G$.
\end{theorem}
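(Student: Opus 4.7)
The plan is to derive each part from Theorem \ref{corthmA}, Theorem \ref{ThmC}, and standard structure theory for geometric $\mathrm{CAT}(0)$ actions. For part (i), I would first argue that every $g \in G_X$ has constant displacement function: since $X$ is $\mathrm{CAT}(0)$, $d_g(x) = d(x,gx)$ is convex on $X$, and being bounded it must be constant, so elements of $G_X$ are Clifford translations of $X$. Using the de Rham-type decomposition available for $\mathrm{CAT}(0)$ spaces admitting a geometric group action, I would pass to a finite-index subgroup $G_0 \le G$ preserving a splitting $X = X' \times \R^n$ in which $\R^n$ absorbs every Clifford translation (acting trivially on $X'$ and by translation on $\R^n$). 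This realises $G_X \cap G_0$ as a subgroup of the translation group of $\R^n$, and properness of the $G$-action forces its image to be discrete, hence isomorphic to $\Z^k$ for some $k \le n$. Thus $G_X$ is virtually $\Z^k$.

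For parts (ii) and (iii), a geometric action is properly discontinuous and cobounded, so the hypotheses of Theorem \ref{ThmC} are met. Cocompactness of the $G$-action on $X$ propagates through the coarsely surjective equivariant map $X \to X_D$ to make every $G$-orbit in $X_D$ coarsely dense, so $X_D$ unbounded implies some $G$-orbit in $X_D$ is unbounded, and $X_D$ not a quasi-line implies no $G$-orbit in $X_D$ is a quasi-line. Theorem \ref{ThmC} then yields that $G_X$ is virtually cyclic in (ii) and finite in (iii); combined with (i), part (ii) forces $G_X$ to be virtually $\Z$ or finite. For the maximality statement of (iii), normality of $G_X$ is immediate from the identity $d(hgh^{-1}x, x) = d(gh^{-1}x, h^{-1}x)$. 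For any finite normal $N \trianglelefteq G$, each $n \in N$ has finite $G$-conjugacy class (contained in $N$), so its centralizer $C_G(n)$ has finite index in $G$, and tautologically $n \in Z(C_G(n))$; Theorem \ref{corthmA} then places $n$ in $G_X$, proving $N \subseteq G_X$.

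The principal obstacle lies in part (i): deducing ``virtually $\Z^n$'' from ``bounded displacement'' requires invoking the $\mathrm{CAT}(0)$ splitting theorem to identify Clifford translations with the Euclidean factor of $X$, and then properness to force discreteness of the resulting translation subgroup. Once this is established, parts (ii) and (iii) follow by combining Theorems \ref{corthmA} and \ref{ThmC} with a standard centralizer argument.
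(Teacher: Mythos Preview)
Your arguments for parts (ii) and (iii) are essentially the paper's: reduce to Theorem~\ref{ThmC} via coarse density of $G$-orbits in $X_D$, and for maximality in (iii) you give a pleasant variant using Theorem~\ref{corthmA} in place of the paper's direct observation that a finite normal subgroup moves every point of a quasi-dense orbit by a uniformly bounded amount. Both are fine.

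The gap is in part (i). Your claim that a bounded convex function on a $\mathrm{CAT}(0)$ space must be constant is false without a geodesic-extension hypothesis, and it fails even on proper cocompact spaces. Take $X=[0,1]\times\R$ with $G=\Z/2\times\Z$ acting geometrically, the $\Z/2$ factor reflecting $[0,1]$ about its midpoint and the $\Z$ factor translating $\R$. The reflection $r$ has displacement $d_r(x,t)=|2x-1|$, which is convex and bounded by $1$ but not constant; thus $r\in G_X$ is not a Clifford translation. Your de~Rham step then embeds only the Clifford part of $G_X$ into the translation group of the Euclidean factor, missing $r$, and you offer no argument that what is missed is finite. (In this toy example it is, but establishing that in general is exactly the content of the theorem.)

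The paper's route to (i) is algebraic rather than geometric. From Theorem~\ref{corthmA} one has $G_X=\bigcup\{C_G(H):|G/H|<\infty\}$, and each $C_G(H)$ is virtually abelian since $Z(H)$ has finite index in it. The Ascending Chain Condition for virtually abelian subgroups of $\mathrm{CAT}(0)$ groups (Bridson--Haefliger, II.7.5) forces this union to stabilise at a single $C_G(H)$; then $Z(H)$ is finitely generated abelian (Bridson--Haefliger, II.7.6), so $G_X$ is virtually $\Z^n$. If you want to rescue the Clifford-translation approach, you would need an independent bound on the torsion in $G_X$, which does not seem easier than the algebraic argument.
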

After the first draft of this paper was completed, it was pointed out to us that item \emph{(iii)} also follows from \cite[Corollary~1.6]{baik2024kernel}. However, our proof and their proof uses different techniques.

Our second line of investigation is concerned more directly on the curtain model, and follows on the program started in \cite{PETYT2024109742} of better understanding the group $G$ via its action on the curtain model. A natural question to ask is how much the action of $G$ on $X_D$ might forget information. We show that, from the perspective of coarse kernels, the action of $G$ on $X_D$ retains as much information as the action of $G$ on $X$.

\begin{theorem}
\label{ThmB}
Let $X$ be a proper $\mathrm{CAT}(0)$ space with $X_D$ being unbounded, and let $G$ act on $X$ co-compactly. Then, $G_X=G_{X_D}$.
\end{theorem}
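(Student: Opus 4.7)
The plan is to handle the two inclusions separately, with $G_X \subseteq G_{X_D}$ being nearly immediate and the reverse containing the geometric content. For the former, I would cite that the curtain model construction in \cite{PETYT2024109742} gives a $G$-equivariant map $X \to X_D$ with $d_{X_D} \le d_X$ (up to an additive constant), so any element with uniformly bounded $X$-displacement automatically has uniformly bounded $X_D$-displacement.

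For the reverse inclusion, fix $g \in G_{X_D}$ with $d_{X_D}(gy, y) \le C$ for all $y$. A first observation is that every conjugate $h^{-1}gh$ satisfies the same bound $C$, by the calculation $d_{X_D}(h^{-1}gh \cdot y, y) = d_{X_D}(g \cdot hy, hy) \le C$. Picking a compact set $K \subset X$ with $GK = X$, the identity $d_X(g \cdot hk, hk) = d_X(h^{-1}gh \cdot k, k)$ reduces bounding the $X$-displacement of $g$ globally to bounding $d_X(g' \cdot k, k)$ uniformly over $g' \in [g]_G$ and $k \in K$.

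I would then assume for contradiction this supremum is infinite and extract $g_n = h_n^{-1}gh_n$ and $k_n \in K$ with $d_X(g_n k_n, k_n) \to \infty$ while $d_{X_D}(g_n k_n, k_n) \le C$ for all $n$. Passing to a subsequence with $k_n \to k_* \in K$, the points $y_n := g_n k_n$ satisfy $d_X(y_n, k_*) \to \infty$ while $d_{X_D}(y_n, k_*)$ stays bounded, and properness of $X$ gives a subsequential limit $\xi \in \partial X$ for $(y_n)$ in the visual bordification. The hard part will be deriving the contradiction from this setup: an unbounded $X$-sequence with uniformly bounded $X_D$-displacement from a fixed basepoint witnesses a ``flat-like'' direction in $X$ at $\xi$, since the image sequence in $X_D$ does not escape to infinity. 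This should be incompatible with the simultaneous hypotheses of $X_D$ unbounded and $G$ acting cocompactly on $X$, which together should force enough hyperbolicity in $X$ to preclude such flat directions. I expect to finish the argument either via the $\partial X_D \hookrightarrow \partial X$ correspondence developed in \cite{PETYT2024109742}, or via a rank-rigidity-style analysis showing that cocompact $\mathrm{CAT}(0)$ actions with unbounded curtain model have no $G$-invariant flat directions at $\partial X$; this step is where the substantive geometric work lies.
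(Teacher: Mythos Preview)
Your contradiction setup has a genuine gap: the mechanism you propose does not produce a contradiction. Extracting a visual limit $\xi \in \partial X$ of points $y_n$ with $d_X(y_n,k_*)\to\infty$ but $d_{X_D}(y_n,k_*)$ bounded merely exhibits a boundary point outside the image of $\partial X_D$, and such points exist in abundance under the hypotheses of the theorem. Take $X=\mathbb{R}\times\mathbb{H}^2$ with a geometric $\mathbb{Z}\times\Gamma$-action: here $X_D$ is unbounded (curtains coming from the $\mathbb{H}^2$ factor give arbitrarily long $L$-chains), yet the entire $\mathbb{R}$-direction consists of exactly such ``flat'' boundary points. So the expectation that ``$X_D$ unbounded plus cocompactness precludes flat directions at $\partial X$'' is simply false, and no rank-rigidity statement will rescue it --- the theorem must hold for products like this one. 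The conjugacy/fundamental-domain reduction, while correct as a reduction, does not by itself lead anywhere.

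The paper's proof bypasses sequences and works instead through the boundary-fixing subgroups $K_X=\{g: g\text{ fixes }\partial X\text{ pointwise}\}$ and $K_D=\{g: g\text{ fixes }\partial X_D\text{ pointwise}\}$. One checks elementarily that $G_X\subseteq G_{X_D}\subseteq K_D$. The substantive step is $K_D=K_X$, which follows from Theorem~L of \cite{PETYT2024109742}: under the hypotheses, $\partial X_D$ embeds as a \emph{dense} $\mathrm{Isom}(X)$-invariant subset of $\partial X$, so an isometry fixing $\partial X_D$ pointwise must fix all of $\partial X$. Finally $K_X\subseteq G_X$ is proved using Ontaneda's result that every point of a cocompact proper $\mathrm{CAT}(0)$ space lies uniformly close to some geodesic ray from a fixed basepoint, combined with convexity of the $\mathrm{CAT}(0)$ distance along rays. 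The $\partial X_D\hookrightarrow\partial X$ correspondence you mention is indeed the right tool, but it should be applied to show that $g$ itself fixes all of $\partial X$, not to analyze a single extracted limit direction $\xi$.
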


Further, for a group acting geometrically the result above gives a complete characterization of when $G_X$ and $G_{X_D}$ coincide. 

\begin{cor}
\label{corthmB}  Let $G$ be a group acting geometrically on a $\mathrm{CAT}(0)$ space $X$. Then, exactly one of the following holds.
    \begin{enumerate}
        \item $X_D$ is unbounded and hence $G_X = G_{X_D}$;
        \item $X_D$ is bounded, $|G:Z(G)|<\infty$, and $G_X = G = G_{X_D}$; 
        \item $X_D$ is bounded, $Z(G)$ has infinite index in $G$ and $G_X \neq G_{X_D}$.
    \end{enumerate}
\end{cor}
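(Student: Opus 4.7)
The plan is to verify that the three cases are mutually exclusive and exhaustive, handle Case (1) directly via Theorem \ref{ThmB}, and reduce Cases (2) and (3) to the equivalence
\[ G_X = G \iff [G:Z(G)] < \infty. \]
A geometric action on a $\mathrm{CAT}(0)$ space forces $X$ to be proper and the action to be cocompact, so Case (1) is immediate from Theorem \ref{ThmB}. When $X_D$ is bounded, every isometry of $X_D$ has uniformly bounded displacement, and hence $G_{X_D} = G$ automatically; Cases (2) and (3) will then follow from the displayed equivalence combined with this trivial observation.

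For the forward direction, if $[G:Z(G)] < \infty$ then for each $g \in G$ the subgroup $H := \langle g\rangle Z(G)$ contains $Z(G)$, so has finite index, and has $g$ in its center since $g$ commutes with itself and with $Z(G)$. Theorem \ref{corthmA} then places $g$ in $G_X$, giving $G_X = G$. For the backward direction, assume $G_X = G$: Theorem \ref{corthmA} tells us that every $g \in G$ lies in $Z(H_g)$ for some finite-index $H_g$, and hence that $C_G(g) \supseteq H_g$ has finite index in $G$. Since a geometric action forces $G$ to be finitely generated, I would pick generators $g_1, \dots, g_k$ and write
\[ Z(G) = \bigcap_{i=1}^{k} C_G(g_i), \]
the intersection of finitely many finite-index subgroups, which is itself of finite index. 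Combining with $G_{X_D} = G$ then yields Cases (2) and (3): in Case (2), $G_X = G = G_{X_D}$; in Case (3), $G_X \subsetneq G = G_{X_D}$, so in particular $G_X \neq G_{X_D}$.

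The main potential obstacle is the backward implication $G_X = G \Rightarrow [G:Z(G)] < \infty$, but it ultimately rests on the classical observation that the center is the intersection of the centralizers of any generating set, and therefore inherits the finite-index property from the individual centralizers. Finite generation of $G$ is essential here and is provided precisely by the cocompactness part of \say{geometric}; beyond this, the argument is a direct application of Theorems \ref{corthmA} and \ref{ThmB}.
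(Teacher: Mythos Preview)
Your proof is correct and follows the same overall structure as the paper: handle Case~(1) via Theorem~\ref{ThmB}, observe that bounded $X_D$ forces $G_{X_D}=G$, and reduce Cases~(2)--(3) to the equivalence $G_X=G \iff [G:Z(G)]<\infty$. The forward direction is essentially identical to the paper's (the paper just writes $G=C_G(Z(G))\subseteq G_X$, which is your argument in compressed form).

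The genuine difference is in the backward implication $G_X=G \Rightarrow [G:Z(G)]<\infty$. The paper invokes the structural fact, established inside the proof of Theorem~\ref{ThmC'}(i), that $G_X=C_G(H)$ for a \emph{single} finite-index subgroup $H$; that fact in turn relies on the ascending chain condition for virtually abelian subgroups of $\mathrm{CAT}(0)$ groups from Bridson--Haefliger. From $G=C_G(H)$ one gets $H\subseteq Z(G)$ and hence $[G:Z(G)]\le[G:H]<\infty$. Your route avoids this machinery entirely: you use only that a geometric action makes $G$ finitely generated, that each generator has finite-index centralizer by Theorem~\ref{corthmA}, and that $Z(G)$ is the (finite) intersection of these centralizers. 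This is strictly more elementary and self-contained for the corollary at hand; the paper's approach has the advantage of reusing a result already proved, but at the cost of importing the ACC argument.
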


\subsection{Weak acylindricity}

In order to prove Theorem~\ref{ThmC'}, we will need to study weakly acylindrical actions on a quasi-line which is a topic of independent interest. We recall the definition of weak acylindrical action. 

\begin{Definition}
    Let $G$ be a group acting on a metric space $X$. The action of $G$ is said to be \emph{weakly acylindrical} if for each $\epsilon>0$ there exists $R$ such that
for any $x,y\in X$ with $d(x,y) > R$, only finitely many $g\in G$ satisfy $max\{d(x,g\cdot x),d(y,g\cdot y)\} < \epsilon$.
\end{Definition}

In this setting, we extend a well-known result for acylindrical actions on a quasi-line.

\begin{theorem}
Let $X$ be a roughly geodesic hyperbolic space. Let $G $ act on $ X$ weakly acylindrically with a quasi-line orbit. Then $G$ is virtually $\Z$.
\end{theorem}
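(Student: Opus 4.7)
My plan is to produce an infinite-order element $h \in G$ with positive translation length on the quasi-line orbit, and then to use weak acylindricity together with a pigeonhole on translation lengths to show that $\langle h \rangle$ has finite index in $G$. I pick $x_0 \in X$ whose $G$-orbit $\Gamma = G \cdot x_0$ is a quasi-line and fix a quasi-isometry $\phi \colon \Gamma \to \R$. Since each $g \in G$ restricts to an isometry of $\Gamma$ with the induced metric, the conjugate $\phi \circ g \circ \phi^{-1}$ is a quasi-isometry of $\R$ whose constants depend only on $\phi$, so it lies within a uniform distance $D$ of an affine map $t \mapsto \alpha(g) t + \tau(g)$. A short check using that $\alpha \colon G \to \R^{\times}$ is a homomorphism with bounded image forces $\alpha(g) \in \{\pm 1\}$; I set $G^+ = \ker \alpha$, which has index at most two, and pass to the homogenization $\bar\tau \colon G^+ \to \R$, a homogeneous quasi-morphism of uniform defect $\delta$ that is pointwise close to $\tau$ within a uniform constant.

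Since $\Gamma$ is unbounded and $[G:G^+] \leq 2$, the orbit $G^+ \cdot x_0$ is unbounded, which produces some $h \in G^+$ with $\bar\tau(h) \neq 0$; after possibly inverting, I arrange $\bar\tau(h) > 0$, and this positivity forces $h$ to have infinite order. For arbitrary $g \in G^+$, I choose the unique $n = n(g) \in \Z$ with $|\bar\tau(g) - n\bar\tau(h)| \leq \bar\tau(h)/2$; homogeneity of $\bar\tau$ and the defect bound then give $|\bar\tau(gh^{-n})| \leq \bar\tau(h)/2 + \delta$, and transferring this estimate through the closeness of $\tau$ to $\bar\tau$ and through $\phi$ yields a uniform constant $\epsilon_0 > 0$, \emph{independent of $g$}, such that $\max\{d_X(gh^{-n} \cdot p, p), d_X(gh^{-n} \cdot q, q)\} < \epsilon_0$ for all $p, q \in \Gamma$. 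Applying weak acylindricity with parameter $\epsilon_0$, I fix $p, q \in \Gamma$ with $d_X(p, q) > R(\epsilon_0)$; then $F = \{g' \in G : \max\{d_X(g' p, p), d_X(g' q, q)\} < \epsilon_0\}$ is finite, and $gh^{-n(g)} \in F$ for every $g \in G^+$. Hence $G^+ \subseteq \bigcup_{k \in F} \langle h \rangle k$, so $\langle h \rangle$ has finite index in $G^+$, and therefore in $G$, giving $G$ virtually $\Z$.

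The principal obstacle will be collecting the various uniformity statements: uniform QI constants for the induced action on $\R$, uniform closeness of each induced map to an affine map, uniform defect of the homogenized quasi-morphism, and uniformity of the $X$-diameter bound in terms of the $\R$-translation amount. None of these is individually deep, but all rest on $G$ acting by honest isometries on $\Gamma$ together with standard facts about quasi-isometries of $\R$; ensuring that all constants are uniform in $g$, so that $\epsilon_0$ above is independent of $g$, is the technical heart of the argument, and it is also the only place where the pigeonhole would fail without passing from $\tau$ to $\bar\tau$, since the naive quasi-morphism defect of $\tau$ on the sequence $h^{-n}$ grows linearly in $n$.
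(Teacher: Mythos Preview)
Your strategy is essentially the paper's: pass to an index-$\leq 2$ subgroup fixing the two ends of the quasi-line, produce a loxodromic element $h$, show that every element differs from some power of $h$ by something moving two far-apart points a uniformly bounded amount, and invoke weak acylindricity. The paper carries this out via the Morse lemma in the ambient hyperbolic space; you recast it in quasi-morphism language on the induced quasi-action on $\R$. Either packaging is fine.

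There is, however, a genuine gap in the step where you assert that $\phi\circ g\circ\phi^{-1}$, being a quasi-isometry of $\R$ with uniform constants, ``lies within a uniform distance $D$ of an affine map $t\mapsto\alpha(g)t+\tau(g)$''. This is false for quasi-isometries of $\R$ in general: the map $f(t)=t$ for $t\le 0$ and $f(t)=2t$ for $t>0$ is a $(2,0)$-quasi-isometry of $\R$ that is not within bounded distance of any affine map. So $\alpha(g)$ and $\tau(g)$ cannot be extracted from the QI constants alone, and the subsequent argument that $\alpha$ is a homomorphism with image $\{\pm 1\}$ has nothing to stand on. The same issue resurfaces at the end: passing from a bound on $\bar\tau(gh^{-n})$ to a bound on $d_X(gh^{-n}\cdot p,p)$ for \emph{all} $p\in\Gamma$ is exactly the parametrised affine statement you have not justified.

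The repair is straightforward. Define $\alpha\colon G\to\{\pm 1\}$ directly as the action on the two ends of $\Gamma$ (this is what the paper does), and on $G^{+}=\ker\alpha$ set $\tau(g)=\phi(gx_0)-\phi(x_0)$. To prove this is a quasi-morphism, and to propagate the bound on $\bar\tau(gh^{-n})$ to a second far-away orbit point, you must use more than the bare QI constants: either choose $\phi$ to be (close to) a Busemann function for one end of the hyperbolic space $\Gamma$, in which case $\phi(gp)-\phi(p)$ is uniformly close to a constant for each $g\in G^{+}$; or argue as the paper does, comparing the quasi-geodesic $n\mapsto h^{n}x_0$ to a genuine geodesic via the Morse lemma to control $d_X(gh^{-n}\cdot h^{N}x_0,\,h^{N}x_0)$ for large $N$. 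Once that is in place, the rest of your argument goes through unchanged.
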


As a corollary, we establish the following result, which can be viewed as a counterpart to the ping-pong lemmas for $\mathrm{CAT}(0)$ spaces, thereby helping us better understand the curtain model towards the \emph{Tits Alternative}.

\begin{theorem}\label{thmi:quasi-line}
      Let $G$ be a group acting geometrically on a $\mathrm{CAT}(0)$ space $X$. Let $H$ be any subgroup of $G$. If $|\partial_{X_D} H|=2$, then $H$ is virtually $\Z$.
\end{theorem}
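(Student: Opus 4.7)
The plan is to reduce this statement to the preceding theorem on weakly acylindrical actions on quasi-lines, applied to the restriction to $H$ of the induced $G$-action on the curtain model $X_D$. We need to verify: (i) $X_D$ is a roughly geodesic hyperbolic space, (ii) the action of $H$ on $X_D$ is weakly acylindrical, and (iii) some $H$-orbit in $X_D$ is a quasi-line. Item (i) is immediate from the construction of $X_D$ in \cite{PETYT2024109742}. Item (ii) follows from weak acylindricity of the induced $G$-action on $X_D$, which should be established earlier in the paper en route to Theorem~\ref{ThmC'}, passed down to the subgroup $H$ (weak acylindricity is inherited by subgroups).

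Thus the main task is to verify (iii). Since $|\partial_{X_D} H| = 2$, write $\partial_{X_D} H = \{\xi_+, \xi_-\}$. The group $H$ preserves this pair setwise, and an index-at-most-$2$ subgroup $H_0 \leq H$ fixes $\xi_+$ and $\xi_-$ individually. Because finite subgroups have empty limit set and subgroups of parabolic type fix exactly one boundary point, the hypothesis $|\partial_{X_D} H_0| = 2$ forces $H_0$ to contain a loxodromic element $h$, whose two fixed points are necessarily $\xi_\pm$. Let $\gamma$ be a quasi-axis of $h$ and fix $x \in \gamma$. Then $\langle h \rangle \cdot x \subset H \cdot x$ is already coarsely dense on $\gamma$ and quasi-isometric to $\Z$. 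Moreover, each element of $H$ sends $\gamma$ to another bi-infinite quasi-geodesic with endpoints $\{\xi_+, \xi_-\}$; by stability of quasi-geodesics in the hyperbolic space $X_D$, every such translate lies within a uniform bounded distance of $\gamma$, so $H \cdot x$ lies in a bounded neighborhood of $\gamma$. Combining, $H \cdot x$ is coarsely dense in a bounded neighborhood of $\gamma$, hence quasi-isometric to $\R$, i.e., a quasi-line.

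With (i)--(iii) established, the preceding theorem yields that $H$ is virtually $\Z$, as desired.

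The main obstacle I anticipate is carefully establishing (iii) in the roughly-geodesic setting of the curtain model, where one must replace genuine geodesics by uniform quasi-geodesics throughout and rely on stability of quasi-geodesics to obtain uniform constants. The classification argument singling out a loxodromic element in $H_0$, while morally clear from the standard dichotomy of isometries of a hyperbolic space, also needs to be checked in the roughly-geodesic hyperbolic setting, but it should be routine.
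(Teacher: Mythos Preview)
Your proposal is correct and follows essentially the same route as the paper: verify that $X_D$ is roughly geodesic hyperbolic and that $H$ acts weakly acylindrically (the paper derives the latter from proper discontinuity of $H\leq G$ on $X$ via Theorem~\ref{9.51015}), then show the $H$-orbit is a quasi-line, and apply the preceding theorem. The only cosmetic difference is that the paper packages your step (iii) as an appeal to Theorem~\ref{classification2} (the action is lineal) together with the earlier Lemma~\ref{linealactions}, whose proof is exactly the argument you sketch inline; your concern about the roughly-geodesic setting is handled in the paper by passing to the injective hull.
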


\subsection{Coarse stabilizers of sets}

We, in fact, develop the proof of Theorem~\ref{corthmA} in the broader context of coarse stabilizers of subsets $Y\subseteq X$, which generalise the coarse kernel $G_X$.

\begin{Definition}
   Let $G$ be a group acting on a metric space $X$ and let $Y \subseteq X$. Define the \textbf{coarse stabilizer of Y} to be $G_Y := \{g\in G
   \mid  \exists C \text{ such that }d(g\cdot y,y) \leq C \text{ }\forall y\in Y \}$.
\end{Definition}

The coarse stabilizers are a very well-studied object in geometric group theory. Typically, the set $Y$ is unbounded; otherwise $G_Y = G$. In this sense, one can see that by varying $Y$, one interpolates between $G$ and the coarse kernel $G_X$. We now give a complete algebraic characterization of the coarse stabilizer $G_Y$ when $Y$ coarsely coincides with an orbit of a finitely generated subgroup $H$ of $G$. 
In particular, this shows that when $G$ acts geometrically on $X$, we recover Theorem~\ref{corthmA}. In the theorem below, $C_G(H)$ denotes the centralizer of $H$ in $G$.
 
\begin{theorem}
\label{ThmA}
Let $X$ be a metric space, and suppose $G $ acts on $ X$ properly discontinuously and co-boundedly. Let $H\leq G$ be finitely generated, and let $Y\subseteq X$ coarsely coincide with an orbit of $H$. Then, $G_Y = \bigcup \{C_G(H') \mid |H/H'|   <\infty \}$.
\end{theorem}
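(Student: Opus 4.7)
The plan is to establish both containments directly, after first reducing to the case where $Y$ equals an $H$-orbit. The justification for this reduction is that $G_Y$ depends only on the Hausdorff class of $Y$: if $d_{\mathrm{Haus}}(Y,Y')<\infty$, any uniform bound on $d(g\cdot y, y)$ over $Y$ transfers to $Y'$ via two applications of the triangle inequality. Since $Y$ coarsely coincides with some orbit of $H$ by hypothesis, I would replace $Y$ with $H\cdot x_0$ for a chosen basepoint $x_0$.

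For the containment $\bigcup\{C_G(H') : [H:H']<\infty\} \subseteq G_Y$, I would fix $g\in C_G(H')$ together with a finite transversal $T$ for $H'$ in $H$, so $H = \bigsqcup_{t\in T} H't$. For any $h=h't$, the computation
\[
d(g\cdot hx_0,\, hx_0) = d(h'gt\cdot x_0,\, h't\cdot x_0) = d(gt\cdot x_0,\, t\cdot x_0)
\]
(using $gh'=h'g$ and that $h'$ acts isometrically) is bounded by $\max_{t\in T} d(gt\cdot x_0, t\cdot x_0)$, a finite maximum, placing $g$ in $G_Y$.

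For the reverse containment, I would start with $g\in G_Y$ satisfying $d(g\cdot hx_0, hx_0)\le C$ for all $h\in H$ and rewrite
\[
d((h^{-1}gh)\cdot x_0,\, x_0) = d(g\cdot hx_0,\, hx_0) \le C.
\]
Every element of the $H$-conjugation orbit of $g$ therefore displaces $x_0$ by at most $C$, and proper discontinuity of the $G$-action forces this orbit to be finite. Its stabilizer $H':=H\cap C_G(g)$ consequently has finite index in $H$, and by construction $g$ commutes with every element of $H'$, so $g\in C_G(H')$.

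The only substantive step is spotting the reformulation $d(g\cdot hx_0, hx_0) = d((h^{-1}gh)\cdot x_0, x_0)$, which converts the coarse-geometric condition on $g$ into the purely algebraic statement that the $H$-conjugation orbit of $g$ is finite. Beyond this, the argument is coset bookkeeping plus a direct invocation of proper discontinuity; I do not expect either co-boundedness of the action or finite generation of $H$ to enter the proof essentially, though they remain standing hypotheses for compatibility with Theorem~\ref{corthmA} and the surrounding results.
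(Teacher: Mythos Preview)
Your proof is correct, and the forward direction is genuinely simpler than the paper's. The paper proceeds by first establishing an auxiliary result: that $C_G(H)$ acts geometrically on the \emph{coarse fixed set} $CFix_X(H)=\{x\in X : d(h\cdot x,x)\le C(h)\ \forall h\in H\}$. This step uses the co-boundedness of $G$ on $X$ (to approximate arbitrary points of $CFix_X(H)$ by $G$-translates of a basepoint) and the finite generation of $H$ (to pass to a subsequence on which all the conjugates $g_m^{-1}h_ig_m$ stabilise). The containment $G_Y\subseteq\bigcup C_G(H')$ is then deduced by applying this to $\langle g\rangle$: since $Y\subseteq CFix_X(\langle g\rangle)$ and $C_G(g)$ acts co-boundedly there, one extracts finitely many cosets covering $H$.

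Your conjugation-orbit reformulation $d(g\cdot hx_0,hx_0)=d((h^{-1}gh)\cdot x_0,x_0)$ short-circuits all of this: proper discontinuity alone forces $\{h^{-1}gh:h\in H\}$ to be finite, and orbit--stabiliser gives $[H:H\cap C_G(g)]<\infty$ immediately. As you observe, neither co-boundedness of the $G$-action nor finite generation of $H$ is actually used. What the paper's detour buys is the statement about $CFix_X(H)$ itself, which is of independent interest (it is used in the subsequent remarks to show $CFix_X(H)$ is well defined up to quasi-isometry and to detect when $C_G(H)$ is infinite), but for the bare equality $G_Y=\bigcup C_G(H')$ your argument is the more economical one.
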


\subsection*{Acknowledgement}
This work was done as part of an undergraduate research project under the supervision of Davide Spriano. Davide suggested the initial problems which evolved into this paper, and his insights helped me resolve many queries. His dedicated time also helped me acquire the necessary background knowledge and assisted in refining several of the proofs. I am extremely grateful to Davide for taking the time to supervise this project and would like to thank him for all his guidance and support. I would also like to thank Harry Petyt for his kind help, which enhanced my understanding of the topic, and for his helpful input on an earlier draft of the paper.

\section{Background}

In this section, we present definitions and established results necessary to prove our main theorems later. The references for this material are \cite{PETYT2024109742} and \cite{caprace:amenable}.

\subsection{Curtain Model}
\label{sec2.1}
We briefly discuss the \emph{curtain model} $X_D$ associated with any $\mathrm{CAT}(0)$ space $X$, as defined in \cite{PETYT2024109742}. Essentially, the curtain model $X_D:=(X,D)$ is the set $X$ equipped with a new metric $D:X\times X\rightarrow  \R$. We aim to choose the metric $D$ so that $X_D$ becomes a hyperbolic space.\\
The main ingredient in defining $X_D$ is the notions of \emph{curtains} and that of \emph{separation}.

\begin{Definition}[Definition 2.1 in \cite{PETYT2024109742}]
Let $X$ be a $\mathrm{CAT}(0)$ space, and let $\alpha:I \rightarrow X$ be a geodesic. For a number $r$ with $[r-1/2,r+1/2]$ in the interior of $I$, the \emph{curtain} dual to $\alpha$ at $r$ is 
   $$ h_{\alpha,r}=\pi_\alpha^{-1} (\alpha[r-1/2,r+1/2]).$$
Here $\pi_\alpha$ denotes the closest-point projection onto $\alpha$.
\end{Definition}

\begin{Definition}[Definition 2.2 in \cite{PETYT2024109742}]
Let $X$ be a $\mathrm{CAT}(0)$ space, and let $h=h_{\alpha,r}$ be a curtain. The \emph{halfspaces} determined by $h$ are $h^{-}=\pi_\alpha^{-1} \alpha(I\cup (-\infty, r-1/2))$ and $h^{+}=\pi_\alpha^{-1} \alpha(I\cup (r+1/2,\infty))$. Note that $\{h^{-},h,h^{+}\}$ is a partition of $X$. If $A$ and $B$ are subsets of $X$ such that $A\subseteq h^-$ and $B\subseteq h^+$, then we say that $h$ separates $A$ from $B$.
\end{Definition}

Next, we define \emph{chains} and the \emph{chain distance}.
\begin{Definition}[Definition 2.9 in \cite{PETYT2024109742}]
     A set $\{h_i\}$ of curtains is a \emph{chain} if $h_i$ separates $h_{i-1}$ from $h_{i+1}$ for all $i$. We say that $\{h_i\}$ separates $A, B \subseteq X$ if every $h_i$ does.
     The \emph{chain distance} from $x$ to $y\neq x$ is $d_{\infty}(x,y)=1+ max\{|c| : \text{c is a chain separating x, y}\}$.
\end{Definition}

This allows us to define the notion of an \emph{$L$-chain}.
\begin{Definition}[Definition 2.11 in \cite{PETYT2024109742}]
    Let $L\in \N$. Disjoint curtains $h$ and $h'$ are said to be \emph{$L$-separated} if every chain meeting both $h$ and $h'$ has cardinality at most $L$. If $c$ is a chain of curtains such that each pair is $L$-separated, then we refer to $c$ as an \emph{$L$-chain}.
\end{Definition} 

Now, we use the notion of $L$-chains to define a family of metric spaces $X_L:=(X,d_L)$ corresponding to $L\in \N$.
\begin{Definition}[Definition 2.15 in \cite{PETYT2024109742}]
    Given distinct points $x, y \in X$, set $d_L(x, x)=0$ and define $d_L(x,y)=1+ max\{|c| : \text{c is an L–chain separating x from y}\}$.
\end{Definition}

\begin{remark}
One can check that $d_L:X \times X\rightarrow  \R$ is indeed a metric on $X$ and that $d_L(x,y) \leq d_{\infty}(x,y) \leq 1+d(x,y)$ for all $L\in \N$.
\end{remark}

Finally, we define the \emph{curtain model} $X_D$ by combining all the $L$-metrics $d_L$ on $X$.
\begin{Definition}
    Fix a sequence $\lambda_L\in (0,1)$ such that \[ \sum_{n=1}^{\infty}  \lambda_L < \sum_{n=1}^{\infty}  L\lambda_L < \sum_{n=1}^{\infty}  L^2\lambda_L = \Lambda<\infty. \] Then, $X_D:=(X,D)$ where $D(x,y):= \sum_{L=1}^{\infty} \lambda_L d_L(x,y)$ for any $x,y\in X$.
\end{Definition}

We record the following basic fact, which tells us how distances in the curtain model $X_D$ are bounded in terms of the distances in $X$.
\begin{lemma}
 \label{bounddistance}
 $D(x,y)\leq \Lambda (1+d(x,y))$ for all $x,y\in X$.
\end{lemma}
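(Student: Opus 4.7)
The plan is a direct computation leveraging the definition of $D$ and the bound $d_L(x,y) \le 1+d(x,y)$ already recorded in the remark preceding the lemma. First I would expand $D(x,y) = \sum_{L=1}^\infty \lambda_L d_L(x,y)$ and, since each summand is non-negative, bound $d_L(x,y)$ term-by-term using the inequality $d_L(x,y) \le d_\infty(x,y) \le 1 + d(x,y)$, which holds for every $L \in \N$.

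After factoring out $(1+d(x,y))$, this gives
\[
D(x,y) \;\le\; (1+d(x,y)) \sum_{L=1}^\infty \lambda_L.
\]
The final step is to compare $\sum_L \lambda_L$ with $\Lambda$. By the chain of inequalities imposed on the sequence $\lambda_L$ in the definition of the curtain model, namely $\sum_L \lambda_L < \sum_L L\lambda_L < \sum_L L^2 \lambda_L = \Lambda$, we conclude $\sum_L \lambda_L \le \Lambda$, which yields the desired bound.

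There is essentially no obstacle here: the lemma is a warm-up bookkeeping statement whose content is entirely built into the definitions of $d_L$, $D$, and the summability condition on $\lambda_L$. The only thing to double-check is that the convergence of the series is being used legitimately (so that the pointwise bound can be swapped past the sum), but this is immediate since all terms are non-negative and the majorizing series $\sum_L \lambda_L$ is finite by the given assumption $\Lambda < \infty$.
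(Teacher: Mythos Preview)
Your proposal is correct and follows exactly the same one-line computation as the paper: expand $D(x,y)$, bound each $d_L(x,y)$ by $1+d(x,y)$, and use $\sum_L \lambda_L < \Lambda$. There is nothing to add.
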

\begin{proof}
    $D(x,y):= \sum_{L=1}^{\infty} \lambda_L d_L(x,y) \leq \sum_{L=1}^{\infty} \lambda_L (1+d(x,y)) < \Lambda (1+d(x,y))$.
\end{proof}

Now, we state some properties of the \emph{curtain model}.

Firstly, we note that $X_D$ is \emph{$\delta$-hyperbolic} in the sense of Gromov's four point condition, for some $\delta>0$. We also examine how the properness of a group action on a $\mathrm{CAT}(0)$ space $X$ is revised when we move to the curtain model $X_D$.\\
This is summarised in Theorem \ref{9.51015} below, which follows directly from Proposition 9.5, Theorem 9.10, and Proposition 9.16 of \cite{PETYT2024109742}.

\begin{theorem}
    \label{9.51015}
    Let $X$ be a $\mathrm{CAT}(0)$ space. Then, $X_D$ is a roughly geodesic hyperbolic space. Moreover, if $G$ is a group acting properly discontinuously on $X$, then the induced action of $G$ on $X_D$ is weakly acylindrical.
\end{theorem}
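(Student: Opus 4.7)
My plan is to separate the statement into three assertions and treat each one by invoking the corresponding result from Petyt--Spriano--Zalloum, exactly as the author indicates. The first assertion is that $X_D$ is roughly geodesic; I would appeal to Proposition~9.5 of \cite{PETYT2024109742}, whose argument produces, between any two points of $X$, a rough geodesic in the $D$-metric by parameterising the CAT$(0)$ geodesic in $X$ and controlling the drift of $D$-distance along it using the linear nesting of curtains dual to that geodesic.

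For the hyperbolicity, I would cite Theorem~9.10 of \cite{PETYT2024109742}, which verifies Gromov's four-point condition for $D$ by analysing $L$-chains across CAT$(0)$ triangles. The key input is that a long chain of $L$-separated curtains dual to one side of a triangle forces closest-point projections onto that side to concentrate, so the three sides of a $D$-triangle are uniformly thin in $D$; since the weights $\lambda_L$ are summable and decay fast enough (recall $\sum L^2 \lambda_L = \Lambda < \infty$), the thinness constants for the individual $d_L$ assemble into a single hyperbolicity constant for $D$.

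For the weak acylindricity of the induced $G$-action on $X_D$, I would invoke Proposition~9.16 of \cite{PETYT2024109742}. The content is as follows: given $\epsilon>0$, one chooses $R$ large enough that $D(x,y)>R$ forces the existence of an $L$-chain of many curtains separating $x$ from $y$, for some $L$ depending on $\epsilon$. If $g\in G$ satisfies $\max\{D(x,gx),D(y,gy)\}<\epsilon$, then $g$ must nearly preserve a definite portion of this chain, which constrains the CAT$(0)$ displacement of $g$ on a bounded region in $X$. Proper discontinuity of the $G$-action on $X$ then restricts such $g$ to a finite set.

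The main obstacle, were one to reprove the statement from scratch rather than quote it, would be the third part: converting bounded $D$-displacement on two far-apart points into bounded $d$-displacement on a prescribed CAT$(0)$ set, uniformly in the interpolation over all $L$. Since the technology of curtains and $L$-chains is developed in \cite{PETYT2024109742}, the present proof is essentially a bookkeeping exercise that packages the three cited results into a single statement tailored for the applications later in the paper, and I would simply write it as a one-line citation of Proposition~9.5, Theorem~9.10, and Proposition~9.16 of \cite{PETYT2024109742}.
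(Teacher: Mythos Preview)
Your proposal is correct and matches the paper's approach exactly: the paper does not give any argument for Theorem~\ref{9.51015} beyond the sentence immediately preceding it, which states that the result ``follows directly from Proposition~9.5, Theorem~9.10, and Proposition~9.16 of \cite{PETYT2024109742}''. Your write-up simply unpacks the content of those three citations, which is precisely what the paper intends.
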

Now, we look at how the Gromov boundary $\partial X_D$ of $X_D$ embeds in the visual boundary $\partial X$ of $X$. This will be needed in section 5 to prove Theorem \ref{ThmB}.
\begin{thm}[Theorem L in \cite{PETYT2024109742}]
 \label{theoremL}
 Let $X$ be a proper $\mathrm{CAT}(0)$ space. Then the space $\partial X_D$ embeds homeomorphically as an $\mathrm{Isom}$ $X$-invariant subspace of $\partial X$, and every point in the image of $\partial X_D$ is a visibility point of $\partial X$. The embedding is induced by the change-of-metric map $X_D\rightarrow X$. Moreover, if $\partial X_D$ is nonempty and $X$ is cobounded, then $\partial X_D$ is dense in $\partial X$.
\end{thm}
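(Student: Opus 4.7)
The plan is to construct the embedding $\iota \colon \partial X_D \hookrightarrow \partial X$ induced by the identity $X\to X$, viewed as a change-of-metric map, and then check the four assertions in turn. First I would argue that any sequence $(x_n)\subset X$ representing a point of $\partial X_D$ is also unbounded in the CAT(0) metric: this is immediate from Lemma~\ref{bounddistance}, since $D(x_0,x_n)\to\infty$ forces $d(x_0,x_n)\to\infty$. Because $X$ is proper, the CAT(0) geodesic segments $[x_0,x_n]$ have subsequential limits in $\partial X$ by Arzel\`a--Ascoli, so at least a candidate limit point exists.

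The core technical step is then to show that the limit does not depend on the subsequence or on the sequence chosen within the Gromov equivalence class. The idea here is to exploit $L$-chains: if $(x_n)$ and $(y_n)$ have Gromov product in $X_D$ tending to infinity, one can produce arbitrarily long $L$-chains of curtains that are crossed by both the CAT(0) geodesics $[x_0,x_n]$ and $[x_0,y_n]$ in the same order. Since a CAT(0) geodesic meets a curtain in a single well-defined parameter interval, this alignment forces the asymptotic directions in $\partial X$ to agree, which gives both well-definedness and injectivity of $\iota$. Continuity (and being a homeomorphism onto the image) then follows by the same curtain--crossing argument applied to sequences of boundary points: $D$-convergence at infinity is detected by long $L$-chains, which in turn constrain CAT(0)-geodesic rays to be uniformly close on large compact sets. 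Isom$(X)$-invariance is free: any $\mathrm{Isom}(X)$-element permutes curtains and $L$-chains, hence is a $D$-isometry and commutes with $\iota$.

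The main obstacle is the visibility assertion. For distinct $\xi,\eta\in\partial X_D$, hyperbolicity of $X_D$ (Theorem~\ref{9.51015}) provides a bi-infinite rough geodesic in $X_D$, but one must upgrade this to an honest bi-infinite CAT(0) geodesic in $X$ joining $\iota(\xi)$ and $\iota(\eta)$. My plan is to take representatives $y_n\to\xi$, $z_n\to\eta$, consider the CAT(0) segments $[y_n,z_n]$, and extract a subsequential limit via properness of $X$. The delicate point is to prevent these segments from drifting off to a single boundary point; this is controlled by observing that the Gromov product $(y_n\mid z_n)_{x_0}$ in $X_D$ is bounded, which by long-$L$-chain arguments forces the midpoints of $[y_n,z_n]$ to remain in a bounded neighborhood of $x_0$. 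The resulting limit is the required bi-infinite geodesic, proving visibility.

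Finally, for density when $X$ is cobounded, fix some $\xi_0\in\partial X_D$ and let $\eta\in\partial X$ be arbitrary. Coboundedness produces an $r>0$ such that $\mathrm{Isom}(X)\cdot x_0$ is $r$-dense. Using visibility of $\iota(\xi_0)$, there is a bi-infinite CAT(0) geodesic $\gamma$ with one endpoint at $\iota(\xi_0)$ and passing within $r$ of points converging to $\eta$; translating $\gamma$ by isometries of $X$, $\mathrm{Isom}(X)$-invariance of $\iota(\partial X_D)$ produces boundary points in the image of $\iota$ arbitrarily close to $\eta$ in $\partial X$. The main difficulty, again, is the uniform control in the visibility step; everything else is bookkeeping built on that foundation.
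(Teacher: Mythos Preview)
This statement is not proved in the paper at all: it is quoted verbatim as Theorem~L of \cite{PETYT2024109742} and used as a black box in the proof of Theorem~\ref{Theorem B}. There is therefore no proof in the present paper to compare your proposal against; the authors simply import the result from the curtain-model paper.

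That said, your sketch has a substantive gap in the visibility step. The assertion is that every point $\iota(\xi)$ with $\xi\in\partial X_D$ is a visibility point of $\partial X$, meaning that for \emph{every} $\eta\in\partial X$ distinct from $\iota(\xi)$ there is a bi-infinite $\mathrm{CAT}(0)$ geodesic joining them. Your argument only treats the case where \emph{both} endpoints lie in $\iota(\partial X_D)$: you invoke hyperbolicity of $X_D$ to join $\xi,\eta\in\partial X_D$ by a rough geodesic and then upgrade. For a general $\eta\in\partial X$ you have no $X_D$-boundary representative to work with, and the Gromov-product-boundedness argument you describe does not apply. This matters because your density argument explicitly relies on the full visibility statement (``Using visibility of $\iota(\xi_0)$, there is a bi-infinite $\mathrm{CAT}(0)$ geodesic $\gamma$ with one endpoint at $\iota(\xi_0)$'' and the other determined by an arbitrary $\eta\in\partial X$), so the gap propagates. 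The actual proof in \cite{PETYT2024109742} handles visibility by a direct curtain argument that controls the $\mathrm{CAT}(0)$ geometry from a single $X_D$-endpoint, without assuming anything about the second endpoint.
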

\subsection{Actions on a Hyperbolic Space}
\label{sec2.2}
Here, we briefly review Gromov's classification of group actions on hyperbolic spaces as given in Section 3.A of \cite{caprace:amenable}. We will use this in Section 5 to prove Theorem \ref{ThmC}.

Firstly, we recall Gromov's classification of isometries of hyperbolic spaces in terms of the translation length.

\begin{Definition}
    Let $X$ be a metric space and $\phi$ be an isometry of $X$. Then, the \emph{translation length} of $\phi$ is $\tau(\phi):=\lim_{n\rightarrow \infty} d(x,\phi^n(x))/n$.
\end{Definition}

\begin{remark}
    One can check that $\tau(\phi)$ is well-defined and is independent of the choice of $x\in X$.
\end{remark}

Let $X$ be a hyperbolic space. An isometry $\phi$ of $X$ is called -\\
      \textbullet  \text{} \emph{Elliptic} if $\phi$ has bounded orbits.\\
      \textbullet \text{} \emph{Parabolic} if $\phi$ has unbounded orbits and $\tau(\phi)=0$.\\
      \textbullet \text{} \emph{Hyperbolic} if $\tau(\phi)>0$.

Next, we recall the notion of the \emph{Gromov boundary} of a hyperbolic space $X$ and that of the \emph{limit set} of a group in $X$.

Fix a base point $x\in X$. Define the \emph{Gromov product} of points $y,z\in X$ with respect to $x$ as $(y|z)_x:= (d(x,y)+d(x,z)-d(y,z))/2$.
A sequence $(x_n)$ in $X$ is said to be \emph{Cauchy-Gromov} if $(x_n|x_m)_x\rightarrow \infty$ as $n,m \rightarrow \infty$. For convenience, we will write $(x_n|x_m)$ to mean $(x_n|x_m)_x$.

\begin{remark}
Note that $|(y|z)_x - (y|z)_w| = |d(x,y)-d(w,y)+d(x,z)-d(w,z)| /2 \leq d(x, w)$. So the notion of a Cauchy-Gromov sequence is independent of the base point $x\in X$.
\end{remark}

Define an equivalence relation on the Cauchy-Gromov sequences as follows: two sequences $(y_n)$ and $(z_n)$ are equivalent, denoted
$(y_n)\sim (z_n)$, if $(y_n|z_n)\rightarrow \infty$ as $n\rightarrow \infty$.

\begin{Definition}
    Let $X$ be a hyperbolic space. The \emph{gromov boundary} $\partial X$ of $X$ is $\partial X:= \{(x_n) \mid (x_n) \text{ is a cauchy-gromov sequence}\}/{\sim}$.
\end{Definition}

\begin{Definition}
    Let $G$ be a group acting on a hyperbolic space $X$ via isometries. The \emph{limit set} of $G$ in $X$ is {$\partial_X(G):= \{(y_n)\in \partial X \mid (y_n) \sim (g_n\cdot x) \text{ for some } x\in X, g_n\in G\}$}.
\end{Definition}

Now, we state Gromov's classification of group actions on hyperbolic spaces.

\begin{theorem}[Gromov's Classification]
  \label{classification1} 
Let $G$ be a group acting on a hyperbolic geodesic metric space $X$. Then exactly one of the following holds, and the action of $G$ is said to be -\\
    (i) \textbf{bounded} if the orbits of $G$ in $X$ are bounded.\\
    (ii) \textbf{horocyclic} if the orbits are unbounded and $G$ contains no hyperbolic isometry.\\
    (iii) \textbf{lineal} if $G$ contains a hyperbolic isometry, and any two hyperbolic elements have the same endpoints.\\
    (iv) \textbf{focal }if it is not lineal, $G$ contains a hyperbolic isometry, and all hyperbolic elements have one common endpoint.\\
    (v) \textbf{general type} if $G$ contains two hyperbolic isometries which have no common end-point.
\end{theorem}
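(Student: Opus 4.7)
My plan is to prove this classical result by a careful case analysis driven by the north-south dynamics of hyperbolic isometries on the Gromov boundary $\partial X$. The key fact I would repeatedly exploit is that a hyperbolic isometry $g\in G$ has exactly two fixed points $g^\pm \in \partial X$, and for every $\xi\in\partial X\setminus\{g^-\}$ the sequence $g^n \xi$ converges to $g^+$ in $\partial X$, with a symmetric statement for $g^-$. This can be extracted from the standard theory in Section~3.A of \cite{caprace:amenable} together with the Gromov product estimates recorded in Section~\ref{sec2.2}.

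First I would dispose of the straightforward cases: if $G$ has bounded orbits we are in case (i), and if orbits are unbounded but $G$ has no hyperbolic element we are in case (ii). Otherwise, fix a hyperbolic $g\in G$ and consider the set $F=\bigcup\{\{h^+,h^-\}:h\in G\text{ hyperbolic}\}\subseteq \partial X$. If $F=\{g^+,g^-\}$, then by definition every hyperbolic element shares its endpoint pair with $g$, placing us in the lineal case (iii). Otherwise, pick a hyperbolic $h\in G$ with $\{h^\pm\}\neq \{g^\pm\}$; if $\{g^\pm\}\cap\{h^\pm\}=\emptyset$ we are immediately in the general type case (v), while if they share exactly one endpoint $\xi$ I would argue a further dichotomy: either $\xi$ is fixed by every element of $G$, in which case north-south dynamics forces every hyperbolic element of $G$ to have $\xi$ as an endpoint (since any hyperbolic element whose fixed points both differ from $\xi$ would move $\xi$ nontrivially), so we are in the focal case (iv); or there exists $\phi\in G$ with $\phi\xi\neq\xi$, and then conjugating $g$ by appropriate powers of $\phi$ (and possibly $h$) produces a hyperbolic element with endpoint set disjoint from that of a suitable existing hyperbolic element, putting us in case (v). Mutual exclusivity of (iii)--(v) is immediate from the definitions.

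The main obstacle I expect is the conjugation step in the last case: one must ensure the iterated conjugates produce a pair of hyperbolic isometries with \emph{disjoint} (not merely distinct) endpoint sets. This requires careful bookkeeping of how the (up to) four possibly overlapping endpoints in $\{g^\pm\}\cup\{h^\pm\}$ shift under $\phi$ and its powers, together with north-south dynamics to guarantee that the conjugates remain hyperbolic with the expected fixed points. A secondary subtlety is the ``horocyclic'' case (ii): showing that unbounded orbits without any hyperbolic element really forms an honest case (rather than being vacuous) is classical but delicate, and follows from the fact that non-elliptic non-hyperbolic isometries of a hyperbolic space must be parabolic with a unique fixed point on $\partial X$.
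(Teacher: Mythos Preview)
The paper does not prove this theorem. It is stated in Section~\ref{sec2.2} as background, attributed to Gromov and referenced to Section~3.A of \cite{caprace:amenable}, with no proof supplied. There is therefore nothing in the paper to compare your proposal against.

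On the merits of your outline itself: the case analysis is the standard one and is essentially correct. Two remarks. First, in the focal branch you pass from ``$\xi$ is a common endpoint of all hyperbolic elements'' to ``$\xi$ is fixed by every element of $G$''; this step is true but not automatic, and uses that conjugates of hyperbolic elements are hyperbolic together with the fact that you already have two hyperbolic elements $g,h$ whose \emph{other} endpoints differ---without that second element the implication can fail. Second, the obstacle you flag in the last branch (producing a pair of hyperbolic elements with \emph{disjoint} fixed-point sets from $g,h,\phi$ when $\phi\xi\neq\xi$) is real; the clean way to handle it is not raw bookkeeping of the four endpoints under powers of $\phi$, but rather to first show that once $G$ does not fix $\xi$ one can manufacture a hyperbolic element $k$ with $\xi\notin\{k^\pm\}$, and then run north--south dynamics of $k$ against $g$ (or $h$) to obtain two hyperbolic elements with disjoint endpoint pairs. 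Your sketch gestures at this but does not isolate the intermediate step.
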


In fact, the above classification can be described in terms of the limit set $\partial_X (G)$.

\begin{theorem}[Proposition 3.1 in \cite{caprace:amenable}]
\label{classification2}
    Let $G$ be a group acting on a hyperbolic geodesic metric space $X$. Then, the action of $G$ is -\\
  (i) bounded $\iff \partial_X(G)$ is empty.\\
  (ii) horocyclic $\iff |\partial_X(G)|=1$; then $\partial_X(G)$ is the unique finite orbit of $G$ in $\partial X$.\\
  (iii) lineal $\iff |\partial_X(G)|=2$; then $\partial_X(G)$ contains all the finite orbits of $G$ in $\partial X$.\\
  (iv) focal $\iff \partial_X(G)$ is uncountable and $G$ has a fixed point $\xi$ in $\partial_X(G)$; then $\xi$ is the unique finite orbit of $G$ in $\partial X$.\\
  (v) general type $\iff \partial_X(G)$ is uncountable and $G$ has no finite orbit in $\partial X$.
\end{theorem}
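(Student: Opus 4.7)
The plan is to prove the five forward implications; since the five types of action and the five conditions on $\partial_X(G)$ are each mutually exclusive and jointly exhaustive, the converses follow automatically. Case (i) is a definition-chase: if every orbit $G\cdot x$ is bounded, then the Gromov products $(g_n x \mid g_m x)_x$ along any orbit sequence are bounded above by the orbit diameter, so no Cauchy--Gromov sequence arises from an orbit and $\partial_X(G)=\emptyset$.

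For the remaining cases I would first establish, as a lemma, the standard dynamical input: an isometry of a $\delta$-hyperbolic space with unbounded orbit is either parabolic (with a unique fixed point on $\partial X$ realised as the limit of any orbit) or hyperbolic (with exactly two endpoints $g^{\pm\infty}$ arising as the limits of $g^n x$ and $g^{-n}x$), and two hyperbolic elements whose fixed-point pairs are disjoint admit powers realising a ping-pong configuration on $\partial X$, thereby generating a free subgroup with uncountable limit set. A companion observation, used repeatedly below, is that if $g$ is hyperbolic and $h\in G$ fails to preserve $\{g^{\pm\infty}\}$ setwise, then $hgh^{-1}$ is hyperbolic with endpoints $h\cdot\{g^{\pm\infty}\}$, and the pair $\{g, hgh^{-1}\}$ supplies general-type dynamics.

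With these tools, the remaining cases are organised bookkeeping. For (ii) horocyclicity forbids hyperbolic elements, so every non-elliptic element is parabolic; if two parabolics had distinct fixed points, a suitable product would be hyperbolic, contradicting horocyclicity, so all parabolics share a single boundary point which is then the unique point of $\partial_X(G)$. For (iii) lineality pins the limit set to the common endpoint pair $\{\xi^\pm\}$ of all hyperbolic elements, and every $h\in G$ must preserve $\{\xi^\pm\}$ setwise by the conjugation observation, so any finite $G$-orbit in $\partial X$ lies in $\{\xi^\pm\}$. For (iv) the shared endpoint $\xi$ in the focal case is fixed by all of $G$ by the same conjugation argument, while the second-endpoints of hyperbolic elements must form an uncountable $G$-orbit (else one falls back to lineal). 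For (v) independent hyperbolic elements supply ping-pong and hence an uncountable limit set, and any finite $G$-orbit on $\partial X$ would force all hyperbolic elements to share an endpoint with a point of that orbit, contradicting independence. The main obstacle is the ping-pong/north-south step in the general (not-necessarily-proper) hyperbolic setting: without compactness of $\partial X$ one must construct the ping-pong neighbourhoods by hand using $\delta$-thin triangles and Gromov-product estimates along the quasi-axes of the two hyperbolic elements. Once that technical heart is in place, everything else reduces to tracking fixed-point configurations.
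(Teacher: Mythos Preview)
The paper does not prove this theorem at all: it is quoted verbatim as Proposition~3.1 of \cite{caprace:amenable} and used as a black box. So there is no ``paper's own proof'' to compare against; your sketch is an attempt to reprove a cited background result.

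That said, your outline has a genuine gap in case~(ii). You argue that in a horocyclic action every non-elliptic element is parabolic, and then reduce to showing that two parabolics with distinct fixed points would produce a hyperbolic element. But a horocyclic action need not contain any parabolic element: it is entirely possible that every individual $g\in G$ is elliptic while the $G$-orbits are nonetheless unbounded (think of an increasing union of bounded subgroups, or an infinite torsion group acting on a tree). In that situation your argument never gets off the ground. The correct route is to work with the limit set directly: first show that an unbounded orbit in a $\delta$-hyperbolic space always admits a Cauchy--Gromov subsequence (so $\partial_X(G)\neq\emptyset$), and then show that if $\partial_X(G)$ contained two distinct points one could manufacture a hyperbolic element from suitable orbit sequences approximating those two points --- this last step is exactly the technical heart you flag for case~(v), and it is needed already here. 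Your cases~(iii)--(v) are essentially fine once that lemma is in hand, and your identification of the ping-pong construction in non-proper spaces as the main obstacle is accurate.
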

In proving Theorem \ref{ThmC} using this classification, it will be important for us to be able to rule out \emph{horocyclic} and \emph{lineal} actions. We record two lemmas regarding this.

\begin{lemma}
    \label{linealactions}
   Let $X$ be a hyperbolic geodesic metric space and suppose the action of $G$ on $X$ is lineal. Then $G$ has a quasi-line orbit in $X$. 
\end{lemma}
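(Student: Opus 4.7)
The plan is to pick a single hyperbolic element and show that its translation orbit is a quasi-geodesic line $\gamma_0$, and then to prove that the full $G$-orbit of a basepoint stays at bounded distance from $\gamma_0$. The key ingredient is the Morse/stability lemma for quasi-geodesics in hyperbolic spaces, combined with the fact that in a lineal action, the pair of boundary points $\{\xi^+,\xi^-\}$ is preserved setwise by the whole group.

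More precisely, I would begin by using the definition of a lineal action to fix a hyperbolic isometry $g \in G$ with endpoints $\xi^\pm \in \partial X$. Fix a basepoint $x_0 \in X$ and let $\gamma_0 := \{g^n \cdot x_0\}_{n \in \mathbb{Z}}$. Since $\tau(g) > 0$, a standard hyperbolic-space argument shows that $\gamma_0$ is a quasi-geodesic line with endpoints $\xi^\pm$. This gives a candidate quasi-line to compare the whole orbit against. I would then use Theorem~\ref{classification2}(iii): the limit set $\partial_X(G) = \{\xi^+,\xi^-\}$ is $G$-invariant, so for every $h \in G$ we have $h \cdot \{\xi^+,\xi^-\} = \{\xi^+,\xi^-\}$.

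Next, for any $h \in G$, the translated sequence $h \cdot \gamma_0$ is a quasi-geodesic with the same quasi-geodesic constants as $\gamma_0$ (since $h$ is an isometry) and, by the previous step, with endpoints again in $\{\xi^+,\xi^-\}$. The Morse lemma for hyperbolic geodesic spaces then yields a constant $K$, depending only on $\delta$ and the quasi-geodesic constants of $\gamma_0$, such that the Hausdorff distance between $\gamma_0$ and $h \cdot \gamma_0$ is at most $K$. Since $x_0 \in \gamma_0$, we get $h \cdot x_0 \in h \cdot \gamma_0 \subseteq N_K(\gamma_0)$, and therefore $G \cdot x_0 \subseteq N_K(\gamma_0)$. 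Combined with the inclusion $\gamma_0 \subseteq G \cdot x_0$, this shows $G \cdot x_0$ has bounded Hausdorff distance from a quasi-geodesic line, hence is itself quasi-isometric to $\mathbb{R}$, i.e.\ a quasi-line.

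The main obstacle I anticipate is technical rather than conceptual: ensuring that Morse stability is available in exactly the generality stated in the lemma. The hypothesis only says hyperbolic and geodesic (and later applications pass through roughly geodesic hyperbolic spaces via Theorem~\ref{9.51015}), so I want to make sure the quasi-geodesic $\gamma_0$ and its $h$-translates really do satisfy the same uniform quasi-geodesic constants, and that Morse stability applies to compare two quasi-geodesic lines whose endpoint pairs coincide (possibly with reversed orientation when $h$ swaps $\xi^+$ and $\xi^-$). The orientation-swap case is harmless since Morse stability is stated in terms of unparametrised Hausdorff distance, but it is worth being explicit about this in the write-up.
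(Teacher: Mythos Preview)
Your argument is correct and follows essentially the same strategy as the paper: exploit that $G$ preserves the two-point limit set $\{\xi^+,\xi^-\}$, choose a reference (quasi-)line with those endpoints, and use Morse-type stability to conclude that every $G$-translate of the basepoint stays in a fixed neighbourhood of it. The only difference is cosmetic: the paper takes as its reference line an actual geodesic $\gamma$ joining $\xi^-$ to $\xi^+$ (invoking visibility of the Gromov boundary) and then compares the geodesics $g\cdot\gamma$ to $\gamma$, whereas you take the quasi-axis $\gamma_0=\{g^n\cdot x_0\}$ of a hyperbolic element and compare quasi-geodesics. Your choice has the mild advantage of not appealing to visibility (which, strictly speaking, requires some care in non-proper spaces), at the cost of invoking Morse stability for quasi-geodesics rather than just fellow-travelling of geodesics; both routes are standard.
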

\begin{proof}
This is well-known, but we include a proof for completeness.\\
Let $\{\xi^+, \xi^-\}$ be the limit set of $G$. 
Since the Gromov boundary is a visibility space, there exists a geodesic $\gamma$ with endpoints $\xi^-$ and $\xi^+$. Since $G$ fixes $\partial_X (G)$, it fixes the endpoints of $\gamma$. Thus, $g\cdot \gamma$ is a geodesic with the same endpoints as $\gamma$ for every $g\in G$. By hyperbolicity of $X$, the Hausdorff distance between $\gamma$ and $g\cdot \gamma$ is uniformly bounded. In particular, there exists $R$ such that $g\cdot \gamma \subseteq N_R(\gamma)$ for all $g\in G$. \\
Now, fix a basepoint $x_0$ on $\gamma$. We will show that the orbit of $x_0$ is a quasi-line. The previous argument yields that $G \cdot x_0 \subseteq N_R(\gamma)$. Moreover, $G$ contains a hyperbolic isometry by Theorem~\ref{classification1}, and hence we get that the orbit $G \cdot x_0$ coarsely coincides with a neighbourhood of $\gamma$.
\end{proof}
\begin{lemma}
    \label{horocyclicactions}
    Let $G$ be a group acting co-boundedly on a hyperbolic geodesic metric space $X$. Then the action of $G$ on $X$ is not horocyclic.
\end{lemma}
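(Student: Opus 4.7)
I will argue by contradiction. Suppose the action is horocyclic. By Theorem~\ref{classification2}, $\partial_X(G) = \{\xi\}$ consists of a single $G$-fixed point. Co-boundedness makes the orbit $G \cdot x_0$ coarsely dense in $X$: every Cauchy--Gromov sequence in $X$ is equivalent to one in the orbit (replace each term by an orbit point within distance $R$), so $\partial_X(G) = \partial X$, and hence $\partial X = \{\xi\}$ as well. In particular, every geodesic ray of $X$ terminates at $\xi$.

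The strategy is to construct a $1$-Lipschitz function on $X$ that is unbounded below along a ray to $\xi$, yet uniformly bounded on the orbit $G \cdot x_0$---and hence on all of $X$ by co-boundedness---yielding the desired contradiction. Fix a geodesic ray $\gamma \colon [0,\infty) \to X$ from a basepoint $x_0$ to $\xi$, and consider the Busemann function $b(x) := \lim_{t \to \infty}(d(x,\gamma(t)) - t)$. A standard monotonicity argument shows this limit exists, and $b$ is $1$-Lipschitz with $b(x_0) = 0$ and $b(\gamma(t)) = -t \to -\infty$.

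Next, define the Busemann cocycle $c \colon G \to \mathbb{R}$ by $c(g) := b(g \cdot x_0)$. The key claim is that $c$ is a quasi-morphism with defect depending only on the hyperbolicity constant $\delta$. This follows from the standard hyperbolic fact that two Busemann functions based at rays sharing a single endpoint differ by a constant up to $O(\delta)$; applied to $\gamma$ and $g^{-1}\gamma$ (both of which terminate at $\xi$, as $G$ fixes $\xi$), this yields $b(g\cdot y) = b(y) + c(g) + O(\delta)$, from which $c(gh) = c(g) + c(h) + O(\delta)$. Its homogenisation $\tilde c(g) := \lim_n c(g^n)/n$ satisfies $|\tilde c(g)| \le \lim_n d(x_0, g^n \cdot x_0)/n = \tau(g)$ by the Lipschitz bound, so in the horocyclic case---where every $g$ is elliptic or parabolic and thus $\tau(g) = 0$---we get $\tilde c \equiv 0$. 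Since a quasi-morphism whose homogenisation vanishes is bounded by its defect, $c$ is uniformly bounded on $G$.

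To conclude, for each $x \in X$ choose $g \in G$ with $d(x, g \cdot x_0) \le R$ by co-boundedness; the Lipschitz property then gives $|b(x)| \le |c(g)| + R$, which is uniformly bounded, contradicting $b(\gamma(t)) \to -\infty$. The main obstacle in fleshing this out is verifying the quasi-morphism property of $c$ with defect depending only on $\delta$, which reduces to a Gromov-product estimate exploiting that asymptotic rays in a $\delta$-hyperbolic space fellow-travel. If $X$ is not proper enough to admit a literal geodesic ray to $\xi$, one replaces $b$ by a Busemann quasi-character defined through a Cauchy--Gromov sequence representing $\xi$, and the remainder of the argument goes through unchanged.
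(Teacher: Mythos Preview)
Your argument is correct and self-contained, but it is a genuinely different route from the paper's. The paper's proof is a two-line citation: a co-bounded orbit is quasi-dense and hence quasi-convex, and Proposition~3.2 of \cite{caprace:amenable} asserts that a horocyclic action never has a quasi-convex orbit. You instead unpack the mechanism directly: from $\partial X=\{\xi\}$ you build the Busemann function at $\xi$, observe that $g\mapsto b(g\cdot x_0)$ is a quasi-morphism with defect $O(\delta)$ because $G$ fixes $\xi$, and then exploit the vanishing of translation lengths in the horocyclic case to force this quasi-morphism (and hence $b$) to be bounded, contradicting $b(\gamma(t))=-t$. This is essentially the Busemann quasi-character argument underlying the cited proposition, specialised to the co-bounded situation; what you gain is independence from the external reference and an explicit, elementary contradiction, at the cost of several paragraphs of hyperbolic-geometry estimates. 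The only point that deserves a bit more care is the existence of a genuine geodesic ray to $\xi$ in a non-proper space; your suggested fix via horofunctions along a Cauchy--Gromov sequence is the right one, though it would need a sentence or two to make the $O(\delta)$ defect bound precise in that setting.
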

\begin{proof}
    Since the action of $G$ on $X$ is co-bounded, the orbit $G \cdot x_0$ is quasi-dense in $X$. Consequently, the orbit $G \cdot x_0$ is quasi-convex. By Proposition 3.2 of \cite{caprace:amenable}, it follows that the action of $G$ cannot be horocyclic.
\end{proof}
\begin{remark}
    Note that Theorems \ref{classification1}, \ref{classification2}, and Lemmas \ref{linealactions}, \ref{horocyclicactions} also hold for roughly geodesic hyperbolic spaces $X$. This is because we may replace $X$ by the injective hull $E(X)$ of $X$ which now becomes a geodesic hyperbolic space. Moreover, there is an isometric embedding $i: X \rightarrow E(X)$ which is coarsely surjective and $G$-equivariant.\end{remark}

\section{Coarse stabilizer and the coarse fixed set}
 
We work with a general metric space $X$ and give an algebraic characterisation of the coarse stabilizer $G_Y$ of $Y$.
In particular, we show that if $G$ acts geometrically on $X$ and $Y$ coarsely coincides with an orbit of a finitely generated subgroup $H$ of $G$, then $G_Y$ is the union of centralisers $C_G(H')$ where $H'$ runs over finite index subgroups of $H$.\\
In Section 5, we use this to show that for a $\mathrm{CAT}(0)$ group, $G_X$ is virtually $\Z^n$.

The difficult part of the theorem lies in showing that if $g\in G_Y$, then $g\in C_G(H')$ for some finite index subgroup $H'$ of $H$. \\
For this, our strategy will be to show that $C_G(\langle g \rangle)$ acts geometrically on a subset $CFix_X(\langle g \rangle)$ of $X$ containing $Y$. Then, it will follow that $C_G(\langle g \rangle)$ must contain a finite index subgroup of $H$, giving us the desired result.

To this end, we define the following.

\begin{Definition}
   Let $G$ be a group acting on a metric space $X$ and let $H\leq G$. Define the \textbf{coarse fixed set of H} to be $CFix_X(H) := \{x\in X \mid  d(h\cdot x,x)\leq C(h) \text{ } \forall h\in H\}$. Here $C(h)$ is any constant depending on $h$, chosen so that $CFix_X(H) \neq \emptyset$. \\
   (Note that we can always find such a function $C: H \rightarrow \R$. Indeed, fix $x_0 \in X$, then defining $C(h) := d(x_0,h\cdot x_0)$ ensures that $x_0\in CFix_X(H)$.)
\end{Definition}

A consequence of Theorem \ref{Theorem A} below is that if $G$ is a group acting geometrically on $X$ and  $H$ is a finitely generated subgroup of $G$  then, up to quasi-isometries, $CFix_X(H)$ does not depend on the choice of the function $C: H\rightarrow \R$. 

\begin{theorem}
\label{Theorem A}
Let $X$ be a metric space and suppose $G $ acts on $ X$ properly discontinuously and co-boundedly. Let $H\leq G$ be finitely generated, and let $Y\subseteq X$ coarsely coincide with an orbit of $H$. Then, \\
(i) $C_G(H)$ acts geometrically on $ CFix_X(H) $\\
(ii) $G_Y = \bigcup \{C_G(H') \mid |H/H'|   <\infty \}$.
\end{theorem}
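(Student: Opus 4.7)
The strategy is to prove (i) first and then apply it, with $H$ replaced by $\langle g\rangle$, to obtain the hard direction of (ii).

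For (i), the preservation $C_G(H)\cdot CFix_X(H)\subseteq CFix_X(H)$ is immediate (since $g$ commutes with each $h\in H$), and properness of the restricted action is inherited from that of $G$ on $X$. The substance lies in coboundedness. Fix $x_0\in CFix_X(H)$ and generators $h_1,\ldots,h_n$ of $H$. For $x\in CFix_X(H)$, pick $g\in G$ with $d(gx,x_0)\le R_0$ by coboundedness of the action of $G$ on $X$. A triangle-inequality computation yields $d(gh_ig^{-1}x_0,x_0)\le 2R_0+C(h_i)$, so proper discontinuity forces the tuple $(gh_1g^{-1},\ldots,gh_ng^{-1})$ to take only finitely many values. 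Fixing representatives $g_1,\ldots,g_m$ realising each possible tuple, whenever $g$ realises the same tuple as $g_j$ we have $g_j^{-1}g\in C_G(H)$ (it centralises every generator, hence $H$), so $g=g_jc$ for some $c\in C_G(H)$ with $d(cx,x_0)\le R_0+\max_j d(g_j^{-1}x_0,x_0)$: a uniform bound.

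For the easy inclusion $\bigcup\{C_G(H'):[H:H']<\infty\}\subseteq G_Y$ in (ii), given $g\in C_G(H')$ we may replace $H'$ by its normal core in $H$ (still of finite index) so that $H'\trianglelefteq H$, and decompose $H=\bigsqcup h_iH'$. For $y=h_ih'y_0$ in the orbit $Hy_0$, applying the isometry $h_i^{-1}$ gives $d(gy,y)=d(g_ih'y_0,h'y_0)$ where $g_i:=h_i^{-1}gh_i$. Normality of $H'$ in $H$ forces $g_i$ to also centralise $H'$, so $d(g_ih'y_0,h'y_0)=d(g_iy_0,y_0)$, and the maximum over the finitely many cosets furnishes the uniform bound (adjusting by the coarse-equivalence constant between $Y$ and $Hy_0$).

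For the hard inclusion $G_Y\subseteq\bigcup\{C_G(H'):[H:H']<\infty\}$, suppose $g\in G_Y$. A short induction shows $\sup_{y\in Y}d(g^ny,y)<\infty$ for every $n\in\Z$: since $g\cdot Y\subseteq N_C(Y)$, the displacement of $g$ on the $C$-neighbourhood of $Y$ is at most $3C$, and iterating on nested neighbourhoods (whose radii grow at most geometrically in $n$) gives a finite bound for each power. Setting $C(g^n)$ equal to this supremum ensures $Y\subseteq CFix_X(\langle g\rangle)$. Applying (i) to $\langle g\rangle$, the centraliser $C_G(g)=C_G(\langle g\rangle)$ acts geometrically on $CFix_X(\langle g\rangle)$, so for some uniform $R$ and every $h\in H$ there is $c\in C_G(g)$ with $d(ch\cdot y_0,y_0)\le R$. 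Proper discontinuity then makes $F:=\{k\in G:d(ky_0,y_0)\le R\}$ finite, and since $ch\in F$, the subgroup $H$ is covered by finitely many cosets of $C_G(g)$. A standard coset argument yields $[H:H\cap C_G(g)]\le|F|$, so setting $H':=H\cap C_G(g)$ gives $g\in C_G(H')$ with finite index in $H$. The main obstacle throughout is verifying $Y\subseteq CFix_X(\langle g\rangle)$: the hypothesis $g\in G_Y$ only controls the displacement of $g$ on $Y$ itself, and one needs the iterative neighbourhood argument above to obtain a finite (albeit growing) bound for every power of $g$.
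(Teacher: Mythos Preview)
Your proof is correct and follows the same overall strategy as the paper: establish (i) by using proper discontinuity to show the conjugates $gh_ig^{-1}$ take only finitely many values, then for the hard direction of (ii) apply (i) with $H$ replaced by $\langle g\rangle$ to get $C_G(g)$ acting coboundedly on a set containing $Y$, and finish with the coset-counting argument. Your proof of (i) is a direct argument where the paper argues by contradiction, but the content is identical.

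Two minor differences are worth noting. For the easy inclusion of (ii), the paper takes a shorter route than your normal-core and coset-decomposition argument: it simply observes that $H'\cdot y_0$ is already quasi-dense in $Y$ (since $[H:H']<\infty$ and $H$ acts coboundedly on $Y$), and $g\in C_G(H')$ moves every point of $H'\cdot y_0$ by exactly $d(gy_0,y_0)$. Conversely, you are more careful than the paper about the inclusion $Y\subseteq CFix_X(\langle g\rangle)$: the paper asserts without justification that the choice $f(g^n)=nC$ works, but since $g^k y$ need not lie in $Y$ the triangle-inequality telescoping does not obviously give this linear bound; your iterative neighbourhood argument correctly shows that $\sup_{y\in Y}d(g^ny,y)$ is finite for each $n$ (which is all that is needed), even if the bound grows geometrically rather than linearly.
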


\begin{proof}
 (i) This can be proven in a similar way to Theorem 3.2 in  \cite{Ruane:actiononboundary}. \\
 We include the proof for completeness.\\
 Since $H$ is finitely generated, so let $h_1, h_2, ..., h_n$ generate $H$. It is straightforward to see that $C_G(H)\cdot CFix_X(H) \subset CFix_X(H)$. Since $G$ acts on $X$ properly discontinuously, we have that $C_G(H)$ acts on $CFix_X(H)$ properly discontinuously. \\
 So it remains to prove that $C_G(H)$ acts on $CFix_X(H)$ co-boundedly. Let $x_0\in CFix_X(H)$ and suppose for a contradiction that $C_G(H)$ does not act co-boundedly on $CFix_X(H)$. Then there exists $z_n \in CFix_X(H)$ with $d(z_n,C_G(H)\cdot x_0)\rightarrow \infty$ as $n\rightarrow \infty$.  And since $G\cdot x_0$ is quasi-dense in $X$, so there exists $g_n\in G$ and $L >0$ so that $d(g_n \cdot x_0, z_n)<L$ for all $n\in \N$.  Hence, we have 
 \begin{equation}
\begin{split}
\label{e1}
d(g_n\cdot x_0,C_G(H)\cdot x_0)& \geq d(z_n,C_G(H)\cdot x_0)-d(g_n\cdot x_0,z_n)\\ & \geq d(z_n,C_G(H)\cdot x_0)-L\\ &\rightarrow \infty \text{ as } n\rightarrow \infty.
 \end{split}
 \end{equation}
Let $K = max_{i= 1,2,..,n}C(h_i)+2L$. So,   
$$d(x_0,g_m^{-1}h_ig_m\cdot x_0) = d(g_m\cdot x_0,h_ig_m\cdot x_0) \leq d(z_m,h_i\cdot z_m)+2L \leq C(h_i)+2L \leq K.$$
Since $G$ acts on $X$ properly discontinuously, so there are only finitely many $g\in G$ with $g\cdot x_0 \in B_K(x_0)$. Thus,  $\{g_m^{-1}h_ig_m : m\in \N, 1\leq i\leq n\}$ is a finite set.

So, we may restrict to a subsequence of $(g_n)$ to get that the equation ${g_m^{-1}h_1g_m = g_n^{-1}h_1g_n}$ holds for all $g_m,g_n$ in the subsequence. Doing the same for $h_2,h_3,..,h_n$ we get a further subsequence $g_{m_n}$ of $(g_n)_{n\geq 1}$ so that $g_{m_{n'}}^{-1}h_ig_{m_{n'}} = g_{m_n}^{-1}h_ig_{m_n}$ for all $ i,n,n'$. 
Thus, ${g_{m_n}g_{m_1}^{-1}\in C_G(H)}$ for all $n\in \N$ and a fixed $g_{m_1}$. \\
So $d(g_{m_n}\cdot x_0,C_G(H)\cdot x_0) \leq d(g_{m_n}\cdot x_0,g_{m_n}g_{m_1}^{-1}\cdot x_0) = d( x_0,g_{m_1}^{-1}\cdot x_0)$ is bounded as $n$ tends to infinity, contradicting equation \ref{e1}.
 
 (ii) If $g\in G_Y$, then $Y \subseteq CFix_X(\langle g \rangle)$. 
 Note that $CFix_X(\langle g \rangle)$ depends on a function $f \colon \langle g \rangle  \rightarrow \R$.  But we can choose $f$ to satisfy $f(g^n)= nC$ where $d(g\cdot y, y) \leq C$ for all $y\in Y$. This ensures $Y \subseteq CFix_X(\langle g \rangle)$. \\
 By (i), $C_G(g)$ acts geometrically on $CFix_X(\langle g \rangle)$. Thus, if $y_0\in Y$, then ${C_G(g)\cdot y_0 \cap N_R(Y)}$ is quasi-dense in $N_R(Y)$ for some neighborhood $N_R(Y)$ of $Y$. Since $Y$ coarsely coincides with some (and hence every) orbit of $H$, the orbit $H\cdot y_0$ is quasi-dense in some neighborhood $N_{R'}(Y)$ of $Y$. Therefore, there is a constant $D$ such that for all $ h\in H$, there exists $ z\in C_G(g)$ with $d(h\cdot y_0,z\cdot y_0) < D$. In other words, $d(z^{-1} h\cdot y_0,y_0) < D$,  and thus $z^{-1}h \in \{ g_1,g_2,...,g_n \}$, a finite set. This is because the action of $G$ on $X$ is properly discontinuous, meaning that there are only finitely many $g\in G$ with $g\cdot y_0 \in B_D(y_0)$. \\
 So, $H \subseteq C_G(g)\cdot g_1 \cup C_G(g)\cdot g_2 \cup ... \cup C_G(g)\cdot g_n$. Thus, $[C_G(g)H:C_G(g)] \leq n$ and hence $[H:C_G(g)\cap H] = [C_G(g)H:C_G(g)]\leq n$. The last equation is using the fact that for two subgroups $A,B$ it holds $[A:A\cap B] = [BA : B]$, where $BA$ is the set $\{ba \mid b\in B, a\in A\}$. Thus, $H':= C_G(g)\cap H$ is a finite index subgroup of $H$. Also, $g\in C_G(H')$ because $H'\subseteq C_G(g)$ and so $g$ commutes with all $h'\in H'$. \\
 Hence, $g\in \bigcup \{C_G(H') \mid |H/H'|   <\infty \}$.
 
 Conversely, if $g \in C_G(H')$ for some finite index subgroup $H'$ of $H$, then we would have $d(g\cdot (h\cdot y_0), h\cdot y_0) = d(h\cdot (g\cdot y_0), h\cdot y_0) = d(g\cdot y_0, y_0)$ for all $h\in H'$. Therefore, $g\in G_{H'\cdot y_0}$. Since $H'\cdot y_0$ is quasi-dense in $Y$ (as $H$ acts co-boundedly on $Y$, and $|H/H'|<\infty$), it follows that $g\in G_Y$.
 Indeed, let $C>0$ be a constant such that for all $y\in Y$, there exists $h\in H'$ so that $d(h\cdot y_0,y)\leq C$. Then, for all $y\in Y$, we have $$d(g\cdot y, y) \leq d(g\cdot (h\cdot y_0),g\cdot y)+d(g\cdot (h\cdot y_0), h\cdot y_0)+d(h\cdot y_0, y) \leq 2C+d(g\cdot y_0, y_0).$$
\end{proof}

\begin{remark}
    In particular, for a finitely generated subgroup $H\leq G$ we can tell that $C_G(H)$ is infinite just by showing $CFix_X(H)$ is unbounded for some metric space $X$ on which $G$ acts geometrically. 
\end{remark}
\begin{remark}
   Note that under the assumptions as above, we also have that, up to quasi-isometries, $CFix_X(H)$ is independent of the choice of the function $C: H \rightarrow \R$.\\
   Indeed, suppose $CFix_X(H,C_{1})$ and $CFix_X(H,C_2)$ are two such subsets of $X$ corresponding to functions $C_1,C_2:H\rightarrow \R$. Pick any $x_0\in CFix_X(H,C_{1})$. Part (i) of Theorem \ref{Theorem A} tells us that $C_G(H)\cdot x_0$ is quasi-dense in $CFix_X(H,C)$ for any function $C$. And since $x_0\in CFix_X(H,C_{1})\subseteq CFix_X(H,C_1+C_2)$, we have that $C_G(H)\cdot x_0$ is quasi-dense in $CFix_X(H,C_1+C_2)$. If we denote by $A \sim B$ the equivalence relation of quasi-isometry, then we have $CFix_X(H,C_1+C_2) \sim C_G(H)\cdot x_0 \sim CFix_X(H,C_{1})$. 
   Similarly, we get $CFix_X(H,C_1+C_2) \sim CFix_X(H,C_2)$. 
   Therefore, $CFix_X(H,C_2)\sim CFix_X(H,C_{1})$.
\end{remark} 

\begin{cor}
\label{cortheoremA}
    Let $G$ be a group acting properly discontinuously and co-boundedly on a metric space $X$. Then, $G_X = \bigcup \{Z(H) \mid |G/H|   <\infty \}$.
\end{cor}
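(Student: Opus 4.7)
The plan is to apply Theorem~\ref{Theorem A} directly with the subgroup $H$ there chosen to be $G$ itself and with $Y=X$. Since $G$ acts co-boundedly, $X$ coarsely coincides with every $G$-orbit, so the hypothesis on $Y$ is satisfied (the finite-generation requirement on $H$ in the theorem becomes the implicit assumption that $G$ is finitely generated, which is what is needed for the argument of Theorem~\ref{Theorem A}(i) using a finite generating set to go through). The theorem then gives
\[ G_X \;=\; \bigcup \{C_G(H') \mid H' \leq G,\; |G/H'|<\infty\}, \]
and it only remains to identify this union with $\bigcup\{Z(H) \mid |G/H|<\infty\}$, a purely algebraic rewriting.

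For the inclusion $\supseteq$, if $H\leq G$ has finite index and $g\in Z(H)$, then $g$ commutes with every element of $H$, so $g\in C_G(H)$; that is, the set $Z(H)$ is already contained in the set $C_G(H)$ appearing in the first union (taking $H'=H$).

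For the inclusion $\subseteq$, suppose $g\in C_G(H')$ for some $H'\leq G$ of finite index. Set $H:=\langle g, H'\rangle$. Then $H\supseteq H'$ still has finite index in $G$. Moreover, $g$ commutes with itself and with every element of $H'$, hence with every element of the subgroup generated by these, namely $H$. Therefore $g\in Z(H)$, placing $g$ in the desired union.

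The only non-routine step in this plan is the invocation of Theorem~\ref{Theorem A} with $H=G$, which silently requires that $G$ be finitely generated; once that is in place, the translation between centralizers of finite-index subgroups and centers of (possibly enlarged) finite-index subgroups is the short algebraic observation above, and I anticipate no further obstacle.
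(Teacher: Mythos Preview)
Your proposal is correct and follows essentially the same route as the paper: apply Theorem~\ref{Theorem A} with $Y=X$ to obtain $G_X=\bigcup\{C_G(H')\mid |G/H'|<\infty\}$, and then rewrite centralizers of finite-index subgroups as centers of (possibly larger) finite-index subgroups via the map $H'\mapsto \langle g, H'\rangle$. Your observation about the implicit finite-generation assumption on $G$ is accurate and is equally present (though unremarked) in the paper's own application of Theorem~\ref{Theorem A}; in fact, inspection of the proof of Theorem~\ref{Theorem A}(ii) shows that finite generation of $H$ is only used via part~(i) applied to the cyclic group $\langle g\rangle$, so the argument goes through for $H=G$ without that hypothesis.
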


\begin{proof}
    Applying Theorem \ref{Theorem A} with $Y=X$ gives us $G_X = \bigcup \{C_G(H) \mid |G/H|   <\infty \}$. Now, $g\in Z(H)$ implies $g\in C_G(H)$, and hence $\bigcup \{Z(H) \mid |G/H|   <\infty \} \subseteq G_X$. Conversely, if $g\in G_X$, then $g\in C_G(H)$ for some finite index subgroup $H$ of $G$. Letting $\Tilde{H}$ be the subgroup of $G$ generated by $H\cup\{g\}$, we see that $g\in Z(\Tilde{H})$. Moreover, since $H\subseteq \Tilde{H} \subseteq G$ and $|G/H|<\infty$, $\Tilde{H}$ is a finite index subgroup of $G$.\\
    Therefore, $g\in \bigcup \{Z(H) \mid |G/H|   <\infty \}$.
\end{proof}

\section{Weakly-acylindrical actions}

This is an aside on weakly acylindrical actions on a quasi-line. We show that a group acting weakly acylindrically on a roughly geodesic hyperbolic space with a quasi-line orbit has to be virtually $\Z$. This will be usefull later while proving Theorem \ref{Theorem C}. 

\begin{prop}
\label{Theorem D} 
Let $X$ be a roughly geodesic hyperbolic space. Let $G$ act on $X$ weakly acylindrically with a quasi-line orbit. Then $G$ is virtually $\Z$.
\end{prop}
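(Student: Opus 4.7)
The plan is to reduce the action to one on $\R$ by quasi-isometries and extract a translation length function. Let $Y := G \cdot x_0$ be the quasi-line orbit, and fix a $(K, C)$-quasi-isometry $\phi \colon \R \to Y$. Each $g \in G$ acts on $Y$ by isometries, so the conjugate $\tilde{g} := \phi^{-1} g \phi$ is a quasi-isometry of $\R$ whose constants depend only on $\phi$; the same holds for every iterate $\tilde{g}^n$, since $g^n$ is also an isometry of $Y$. The two ends of the quasi-line are either both fixed or swapped by $G$, so after passing to an index-at-most-$2$ subgroup $G^+$ we may assume every $\tilde{g}$ is orientation-preserving. For $g \in G^+$, set $\tau(g) := \lim_{n \to \infty} \tilde{g}^n(0)/n$.

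The key technical step is to use the uniform quasi-isometry constants across all iterates $\tilde{g}^n$ to show that each $\tilde{g}$ lies at uniformly bounded distance from the translation $t \mapsto t + \tau(g)$: any approximation by an affine map of nontrivial slope would make the quasi-isometry constants of $\tilde{g}^n$ blow up as $n \to \infty$, a contradiction. From this one deduces that $\tau$ is a homogeneous quasi-morphism $G^+ \to \R$, and more importantly that the displacement $d(y, gy)$ of any $y \in Y$ is at most $K |\tau(g)| + C_0$ for a constant $C_0$ independent of $g$ and $y$.

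Given this displacement estimate, weak acylindricity implies that for every $M > 0$ the set $\{g \in G^+ : |\tau(g)| \leq M\}$ is finite: choosing $\epsilon > KM + C_0$ and two points $y_1, y_2 \in Y$ with $d(y_1, y_2) > R(\epsilon)$, every such $g$ satisfies $\max\{d(y_1, gy_1), d(y_2, gy_2)\} < \epsilon$, and so weak acylindricity bounds their number. Taking $M = 0$ shows $\ker \tau$ is finite, while varying $M$ shows $\tau(G^+)$ is discrete in $\R$. Since $Y$ is unbounded, $\tau(G^+) \neq \{0\}$, so pick $h \in G^+$ realising the minimal positive value of $\tau$. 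For any $g \in G^+$ choose $n \in \Z$ with $\tau(g) - n\tau(h) \in [0, \tau(h))$; then $\tau(gh^{-n})$ lies in a bounded interval (using the defect of $\tau$ as a quasi-morphism), so $gh^{-n}$ belongs to a finite set. Hence $\langle h \rangle$ has finite index in $G^+$, and $G^+$, together with $G$, is virtually $\Z$.

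The main obstacle is the technical claim in the second paragraph: showing that the shared quasi-isometry constants across all iterates force each $\tilde{g}$ to be at bounded distance from a genuine translation, rather than merely from an arbitrary orientation-preserving quasi-isometry of $\R$. Once this uniform approximation to translations is in place, the weak acylindricity arguments controlling the size of each sublevel set $\{g \in G^+ : |\tau(g)| \leq M\}$ are routine.
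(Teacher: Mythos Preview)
Your overall architecture parallels the paper's: pass to the index-$\le 2$ subgroup fixing the two ends of $Y$, establish that each element moves every point of $Y$ by roughly its translation length, and then use weak acylindricity to bound the number of elements with small translation. Where the paper parametrises $Y$ by the orbit $n\mapsto g^n\cdot y$ of a chosen loxodromic and argues right-coset by right-coset, you pull back via an abstract quasi-isometry $\phi:\R\to Y$ and organise everything through the translation number $\tau$ as a homogeneous quasi-morphism. Once the displacement bound $d(y,gy)\le K|\tau(g)|+C_0$ is granted, your endgame (finite sublevel sets of $|\tau|$, discreteness of $\tau(G^+)$, minimal positive value) is a clean alternative to the paper's coset count and is correct.

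The gap is in your justification of that displacement bound. The heuristic ``an approximation by an affine map of nontrivial slope would make the QI constants of $\tilde g^n$ blow up'' presupposes that $\tilde g$ is close to \emph{some} single affine map, but a $(K',C')$-quasi-isometry of $\R$ need not be: take $f(t)=t$ for $t\le 0$ and $f(t)=2t$ for $t\ge 0$. More to the point, even knowing that every $\widetilde{g^n}=\phi^{-1}g^n\phi$ is a $(K',C')$-quasi-isometry, one cannot conclude purely from this that $\tilde g(t)-t$ is uniformly bounded; the recursion $\widetilde{g^{n+1}}(t)\approx\tilde g\bigl(\widetilde{g^n}(t)\bigr)$ only reproduces the QI inequalities and does not force a slope-$1$ estimate at every scale. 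This is exactly where the paper spends its effort, and where it uses the hyperbolicity of $X$ that your sketch never invokes: by the Morse lemma the quasi-geodesic $n\mapsto g^n\cdot y$ lies within bounded Hausdorff distance of a genuine geodesic $\gamma$, and then comparing distances to the common endpoint $y$ along $\gamma$ (the paper's inequalities \eqref{3}--\eqref{5}) shows that an element moving $y$ by at most $C$ also moves $g^n\cdot y$ by at most $3C+4K$, uniformly. An equivalent repair in your framework is to replace the arbitrary quasi-inverse $\phi^{-1}$ by a Busemann cocycle based at one endpoint of $Y$; its near-additivity under isometries fixing that endpoint---again a consequence of hyperbolicity---yields the uniform bound $|\tilde g(t)-t-\tau(g)|\le C_0$ directly.
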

\begin{proof}
Without loss of generality, we may assume that $X$ is a geodesic metric space. This is because we may replace $X$ by $E(X)$, the injective hull of $X$. Then, there exists $i:X\rightarrow E(X)$, an isometric embedding, which is coarsely surjective and $G$-equivariant. It is straightforward to check that $G$ acts weakly acylindrically on $E(X)$. Also, if $G\cdot x_0$ is a quasi-line orbit of $G$ in $X$ then $i(G\cdot x_0)$ is a quasi-line orbit of $G$ acting on $E(X)$.

So, let $X$ be a geodesic hyperbolic space and $Y\subseteq X$ be a quasi-line orbit of $G$. 

Note that the action of $G$ on $Y$ extends to an action of $G$ on  $\partial Y$. Thus, we get a homomorphism $\phi: G\rightarrow \operatorname{Homeo}(\partial Y)$ with kernel  $K$. Since $|\partial Y|= 2$, we have $|G/K|\leq |\operatorname{Homeo}(\partial Y)|=2$. Thus, $K$ has finite index in $G$. So, an orbit of $K$ is quasi-dense in $Y$, and hence is a quasi-line. Thus, it suffices to prove the proposition for $K$. It will then follow that $K$, and consequently $G$, is virtually $\Z$. 
So, we assume, without loss of generality, that the action of $G$ on $\partial Y$ is trivial.

Now, $|\partial_{Y} G| =|\partial Y|= 2$ and hence, by Theorem \ref{classification2}, it follows that the action of $G$ on $Y$ is lineal. So, $G$ contains a hyperbolic element $g\in G$ i.e. there exists $g\in G$ satisfying  $\tau := \lim_{n\to\infty} d(x,g^n\cdot x)/n > 0$. Since $Y$ is a quasi-line orbit of $G$, we get that for any $y\in Y$, the orbit $\langle g \rangle \cdot y$ is quasi-dense in $Y$.
So, $\psi : \Z \rightarrow Y$ given by $\psi (n)=g^n\cdot y$, is a $(L,C)-$quasi-isometry for some $C,L>0$.

Now, we will show that $\langle g \rangle$ has finitely many right cosets in $G$. This implies that $G$ is virtually isomorphic to $\langle g \rangle$ which is isomorphic to $\Z$.

Fix $y\in Y$. Let $\langle g \rangle a$ be a right-coset of $\langle g \rangle$.\\
As $\psi$ is a $(L,C)-$quasi-isometry, so $d(a\cdot y,\psi (m))=d(a\cdot y,g^m\cdot y)<C$ for some $m\in \Z$. Let $h=g^{-m}a$. Then, 
\begin{equation}
\label{eq1}
    d(h\cdot y, y)=d(g^{-m}a\cdot y,y)=d(a\cdot y,g^m\cdot y)<C
\end{equation}
Similarly, $h g^n \cdot y$ is $C$-close to $g^{b_n}\cdot y$ for some $b_n \in \Z$. Moreover, as $h$ acts trivially on $\partial Y$, so $({g^{b_n}\cdot y})_{n\geq 0}= h\cdot ({g^n\cdot y})_{n\geq 0} = ({g^n\cdot y})_{n\geq 0} \in \partial Y$. Thus, $b_n \rightarrow \infty$ as $n \rightarrow \infty$. Hence, $b_n > 0$ for all $n \geq M$ for some $M \in \N$.

Fix $n \geq M$ and let $N > max(b_n, n)$. Now, $\psi([0,N])$ is a $(L,C)-$quasi-geodesic with endpoints $y,g^N\cdot y$. Let $\gamma$ be a geodesic from $y$ to $g^N\cdot y$. By the Morse lemma, the hausdorff distance $d_H(\gamma, \psi([0,N])) \leq K$ for some $K>0$ which depends just on $C$, $L$. Thus, there exists $c_n, d_n\in \gamma$ which satisfy $d(c_n,g^n\cdot y) \leq K$ and $d(d_n, g^{b_n}\cdot y)\leq K$. 

\begin{figure}
       \centering
       \includegraphics[width=3.5 cm, angle = 270]{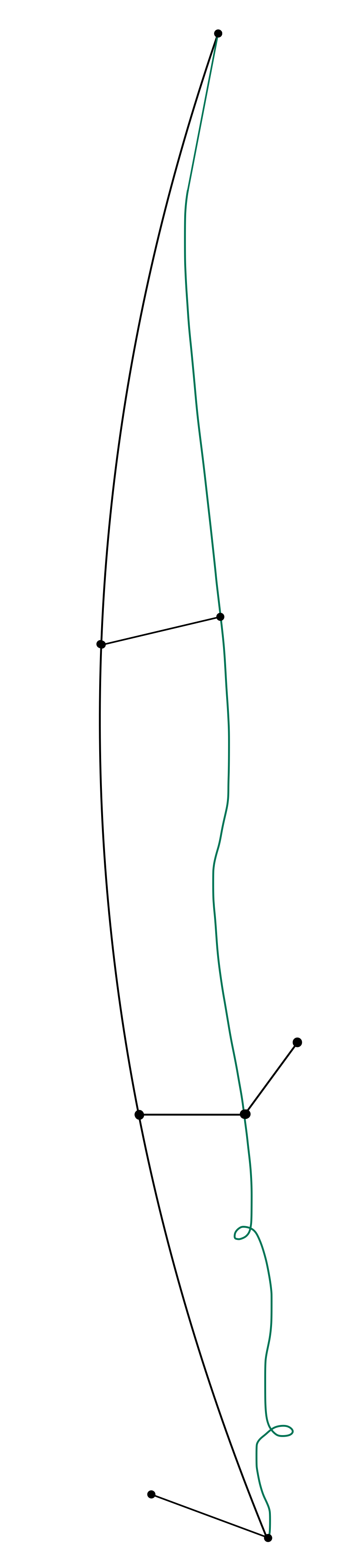}

       \begin{tikzpicture}[remember picture, overlay]
\node[anchor=north east, inner sep=0, font=\scriptsize] at (-2.9,2.62) {$d_n$};
\node[anchor=north east, inner sep=0, font=\scriptsize] at (-3,1.13) {$g^{b_n}\cdot y$};
\node[anchor=north east, inner sep=0, font=\scriptsize] at (-1.4,0.7) {$hg^n\cdot y$};
\node[anchor=north east, inner sep=0, font=\scriptsize] at (-7.15,0.8) {$y$};
\node[anchor=north east, inner sep=0, font=\scriptsize] at (-6.45,2.45) {$h\cdot y$};
\node[anchor=north east, inner sep=0, font=\scriptsize] at (7.6,1.38) {$g^N\cdot y$};
\node[anchor=north east, inner sep=0, font=\scriptsize] at (2.1,1.34) {$g^n\cdot y$};
\node[anchor=north east, inner sep=0, font=\scriptsize] at (1.7,2.85) {$c_n$};
\node[anchor=north east, inner sep=0, font=\scriptsize] at (6,2.35) {$\gamma$};

  \end{tikzpicture}
\caption{The proof of Proposition \ref{Theorem D}}     
       \label{fig:Fig}
   \end{figure}

We hence obtain the following inequalities.
\begin{equation}\label{3}
\begin{split}
|d(y, d_n) - d(y, g^n\cdot y)| &= |d(y, d_n) - d(h\cdot y,h g^n\cdot y)|\\ &\leq d(y,h\cdot y)+ d(d_n, g^{b_n}\cdot y)+ d(g^{b_n}\cdot y, h g^n \cdot y)\\ &\leq 2C+K.
\end{split}
\end{equation}
\begin{equation}\label{4}
 |d(y, c_n) - d(y, g^n\cdot y)| \leq d(c_n, g^n\cdot y) \leq K.
\end{equation}
Thus, we have
\begin{equation}\label{5}
\begin{split}
d(g^{b_n}\cdot y, g^n \cdot y) &\leq d(d_n,c_n)+d(d_n,g^{b_n}\cdot y)+d(c_n,g^n\cdot y)\\ &\leq d(c_n,d_n) + 2K \\ &= |d(c_n,y)-d(d_n,y)|+2K \\ &\leq |d(y, c_n) - d(y, g^n\cdot y)|+ |d(y, d_n) - d(y, g^n\cdot y)|+2K \\ &\leq 2C+4K.
\end{split}
\end{equation}
So, $d(h g^n \cdot y,g^n \cdot y)\leq d(g^{b_n}\cdot y, g^n \cdot y)+d(g^{b_n}\cdot y,h g^n \cdot y)\leq 3C+4K$. 

As $G$ acts weakly acylindrically on $X$, so there exists $R>0$ such that for any $y_1, y_2 \in Y$ with $d(y_1,y_2)>R$ there are only finitely many $k \in G$ which satisfy ${max(d(k\cdot y_1,y_1),d(k\cdot y_2,y_2))\leq 3C+4K}$. Since, $\langle g \rangle \cdot y$ is quasi-dense in a quasi-line, so we can choose $n$ large enough so that $d(y, g^n \cdot y)> R$. Applying the condition that $G$ acts weakly acylindrically on $Y$ with $y_1=y$, $y_2 = g^n \cdot y$, and $k=h:= g^{-m}a$, we see that there are only finitely many possible values for $g^{-m}a$. 

So, there are only finitely many right-cosets $\langle g \rangle a = \langle g \rangle g^{-m}a$ of $\langle g \rangle$.
\end{proof}

We record a consequence of the above proposition, which can be seen as part of the \emph{Tits Alternative} for $\mathrm{CAT}(0)$ groups, serving as a counterpart to the ping-pong lemmas.

\begin{cor}
    Let $G$ be a group acting geometrically on a $\mathrm{CAT}(0)$ space $X$. Let $H$ be any subgroup of $G$. If $|\partial_{X_D} H|=2$ then $H$ is virtually $\Z$.
\end{cor}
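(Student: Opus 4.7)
The plan is to reduce this directly to Proposition~\ref{Theorem D} applied to the action of $H$ on the curtain model $X_D$. To do so, I need to verify three conditions: that $X_D$ is a roughly geodesic hyperbolic space, that $H$ acts weakly acylindrically on $X_D$, and that $H$ has a quasi-line orbit in $X_D$.

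The first condition is immediate from Theorem~\ref{9.51015}. For the second, Theorem~\ref{9.51015} also tells us that $G$ acts weakly acylindrically on $X_D$ (since the geometric action of $G$ on $X$ is in particular properly discontinuous), and the definition of weak acylindricity passes trivially to any subgroup: the same $R$ that works for $G$ works for $H$, as finiteness of $\{g\in G : \max(d(x,g\cdot x),d(y,g\cdot y))<\epsilon\}$ implies finiteness of its intersection with $H$. So $H$ acts weakly acylindrically on $X_D$.

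For the third condition, I will use the classification results of Section~\ref{sec2.2}. The hypothesis $|\partial_{X_D} H| = 2$ combined with Theorem~\ref{classification2} implies that the $H$-action on $X_D$ is lineal. Applying Lemma~\ref{linealactions} (valid for roughly geodesic hyperbolic spaces by the remark at the end of Section~\ref{sec2.2}), we conclude that $H$ has a quasi-line orbit in $X_D$.

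With all three hypotheses verified, Proposition~\ref{Theorem D} applied to the action of $H$ on $X_D$ yields that $H$ is virtually $\Z$, as desired. I do not anticipate a main obstacle: the corollary is essentially a packaging of Proposition~\ref{Theorem D} together with Gromov's classification, specialized to the curtain model setting where weak acylindricity is automatic.
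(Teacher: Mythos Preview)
Your proposal is correct and follows essentially the same route as the paper's proof: both reduce the statement to Proposition~\ref{Theorem D} by invoking Theorem~\ref{9.51015} for rough geodesicity and weak acylindricity, and Theorem~\ref{classification2} together with Lemma~\ref{linealactions} to obtain a quasi-line orbit from $|\partial_{X_D} H|=2$. The only cosmetic difference is that the paper applies Theorem~\ref{9.51015} directly to $H$ (which acts properly discontinuously as a subgroup of $G$), whereas you apply it to $G$ and then restrict; both are equally valid.
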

\begin{proof}
    As $G$ (and hence $H$) acts properly discontinuously on $X$, so by Theorem \ref{9.51015}, $X_D$ is a roughly geodesic hyperbolic space and $H$ acts weakly acylindrically on $X_D$. Also, since $|\partial_{X_D} H| =2$, by Theorem \ref{classification2} and Lemma \ref{linealactions}, the action of $H$ on $X_D$ is lineal and $H$ has a quasi-line orbit in $X_D$. Hence, by Proposition \ref{Theorem D}, $H$ is virtually $\Z$.
\end{proof}

\section{\texorpdfstring{$\mathrm{CAT}(0)$}{CAT(0)} Spaces}

In this section, we specialise to the case when $X$ is a $\mathrm{CAT}(0)$ space.

First, we relate the coarse kernels $G_X$ and $G_{X_D}$ corresponding to the action of $G$ on $X$ and the induced action of $G$ on $X_D$. In particular, we show that if $X$ is a proper $\mathrm{CAT}(0)$ space with an unbounded curtain model $X_D$, and $G$ acts co-compactly on $X$, then $G_X$ and $G_{X_D}$ coincide.

Our strategy will be to look at the action of $G$ on the boundaries $\partial X$ and $\partial X_D$ of $X$ and $X_D$ respectively. In particular, we look at the elements in $G$ which fix the boundaries of $X$ and $X_D$. Define, ${K_X := \{ g\in G \mid g \text{ fixes the boundary } \partial X \text{ pointwise} \}}$ and $K_D := \{ g\in G \mid g \text{  fixes } \partial X_D \text{ pointwise}\}$.
We show that $G_X \subseteq G_{X_D} \subseteq K_D$ and that   $G_X=K_X=K_D$. These together give us $G_X = G_{X_D}$.
\begin{lemma}
$\label{lemma 1}$
Let $X$ be a $\mathrm{CAT}(0)$ space and let $G$ be a group acting via isometries on $X$. If $G_X$, $G_{X_D}$, and $K_D$ are as defined above, then $G_X \subseteq G_{X_D} \subseteq K_D$.
 \end{lemma}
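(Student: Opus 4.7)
The plan is to prove both inclusions directly from the definitions, relying only on a single technical input: the linear comparison estimate of Lemma~\ref{bounddistance}, together with the standard fact that a Cauchy--Gromov sequence diverges in norm. Since the curtain model is defined via $G$-equivariant data (geodesics and closest-point projections in $X$), the action of $G$ on $X$ automatically descends to an action of $G$ on $X_D$ by isometries, and I will use this without further comment.

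For the first inclusion $G_X \subseteq G_{X_D}$, fix $g \in G_X$, so there exists $C > 0$ with $d(g \cdot x, x) \leq C$ for every $x \in X$. Then Lemma~\ref{bounddistance} gives
\[
D(g \cdot x, x) \;\leq\; \Lambda\bigl(1 + d(g \cdot x, x)\bigr) \;\leq\; \Lambda(1+C)
\]
uniformly in $x$, so $g \in G_{X_D}$. This is essentially a one-line application of the comparison lemma.

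For the second inclusion $G_{X_D} \subseteq K_D$, fix $g \in G_{X_D}$ with $D(g \cdot x, x) \leq C'$ for all $x \in X$, and pick an arbitrary $\xi \in \partial X_D$. Represent $\xi$ by a Cauchy--Gromov sequence $(x_n)$ in $X_D$ with basepoint $x_0$; then $g \cdot \xi$ is represented by the sequence $(g \cdot x_n)$. Unpacking the Gromov product in $X_D$ gives
\[
(x_n \mid g \cdot x_n)_{x_0} \;=\; \tfrac{1}{2}\bigl(D(x_0, x_n) + D(x_0, g \cdot x_n) - D(x_n, g \cdot x_n)\bigr),
\]
and combining the bound $D(x_n, g \cdot x_n) \leq C'$ with the triangle inequality $D(x_0, g \cdot x_n) \geq D(x_0, x_n) - C'$ yields $(x_n \mid g \cdot x_n)_{x_0} \geq D(x_0, x_n) - C'$. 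Since $(x_n)$ is Cauchy--Gromov we have $D(x_0, x_n) = (x_n \mid x_n)_{x_0} \to \infty$, so $(g \cdot x_n) \sim (x_n)$ and hence $g \cdot \xi = \xi$. As $\xi \in \partial X_D$ was arbitrary, $g \in K_D$.

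I do not expect any real obstacle: both inclusions are checks of the definitions, and the only ingredients beyond those definitions are the linear comparison $D \lesssim 1 + d$ and the fact that boundary points are represented by sequences diverging in the $D$-metric. The content of this lemma really lies in setting up the chain $G_X \subseteq G_{X_D} \subseteq K_D$ so that the subsequent identifications $G_X = K_X = K_D$ (and hence $G_X = G_{X_D}$) can be carried out in the rest of the section.
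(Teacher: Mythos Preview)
Your proof is correct and follows essentially the same route as the paper: the first inclusion is the one-line application of Lemma~\ref{bounddistance}, and the second is the Gromov-product computation showing $(x_n \mid g\cdot x_n)_{x_0} \geq D(x_0,x_n) - C' \to \infty$. The only cosmetic difference is that the paper leaves the divergence $D(x_0,x_n)\to\infty$ implicit, whereas you spell it out via $(x_n\mid x_n)_{x_0}$.
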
\begin{proof}
If $g \in G_X$, then $d(x,g \cdot x)\leq C$ for all $x\in X$, for some fixed constant $C$. Hence, by Lemma \ref{bounddistance}, we have $D(x,g \cdot x) \leq \Lambda \cdot (1+d(x,g \cdot x)) \leq \Lambda \cdot (C+1)$ for all $x\in X$. So,  $g \in G_{X_D}$. \\
If $g \in G_{X_D}$, then $D(x,g \cdot x)\leq C$ for all $x\in X$ and some fixed constant $C$.
Thus, if $(x_n)$ is a Cauchy-Gromov sequence, then $(g\cdot x_n)$ is a Cauchy-Gromov sequence equivalent to $(x_n)$.\\ This is because,
\begin{align*}
    (x_n|g\cdot x_n)_w &= [D(x_n,w) + D(g\cdot x_n,w) - D(x_n, g\cdot x_n)]/2 \\ &\geq [D(x_n,w) + D(x_n,w) - D(x_n,g\cdot x_n) - D(x_n,g\cdot x_n)]/2 \\ &\geq [D(x_n,w) - C] \rightarrow \infty \text{ as } n \rightarrow \infty
\end{align*}
Hence, $g$ fixes $\partial X_D$, implying that $g \in K_D.$ 
\end{proof}

\begin{theorem}
\label{Theorem B}  
Let $X$ be a proper $\mathrm{CAT}(0)$ space with $X_D$ being unbounded, and let $G$ act on $ X$ co-compactly. Then, $G_X=G_{X_D}$.
\end{theorem}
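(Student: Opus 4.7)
The strategy is to introduce two auxiliary subgroups, $K_X$ and $K_D$ as already defined above, and to squeeze the desired equality via the chain of inclusions
\[ G_X \subseteq G_{X_D} \subseteq K_D \subseteq K_X \subseteq G_X, \]
forcing all of them to be equalities. The first two inclusions are given by Lemma~\ref{lemma 1}, so the work reduces to establishing $K_D \subseteq K_X$ and $K_X \subseteq G_X$.

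For $K_D \subseteq K_X$, I would use the density statement from Theorem~\ref{theoremL}. Since $G$ acts co-compactly on the proper $\mathrm{CAT}(0)$ space $X$, the space $X$ is co-bounded; combined with $X_D$ being unbounded (which, via Theorem~\ref{9.51015} and rough geodesicity, yields $\partial X_D \neq \emptyset$), Theorem~\ref{theoremL} gives that $\partial X_D$ embeds $G$-equivariantly and with dense image into $\partial X$. Because every $g \in G$ acts on $\partial X$ by a homeomorphism, the fixed-point set $\mathrm{Fix}_{\partial X}(g)$ is closed. If $g \in K_D$, this closed set contains the dense subset $\partial X_D$, hence equals $\partial X$, so $g \in K_X$.

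For $K_X \subseteq G_X$, which is the main step, let $g \in K_X$. For any $x \in X$ and any $\xi \in \partial X$, the rays $[x,\xi)$ and $g\cdot[x,\xi) = [gx,\xi)$ are asymptotic (both terminate at the $g$-fixed boundary point $\xi$), so the convexity of the CAT(0) distance function makes $t \mapsto d([x,\xi)(t), [gx,\xi)(t))$ a bounded convex function on $[0,\infty)$, hence non-increasing. Consequently the convex displacement function $d_g$ is non-increasing along every geodesic ray of $X$. Combined with the co-compactness of the $G$-action, this forces $d_g$ to be uniformly bounded on $X$: any sequence $(x_n)$ with $d_g(x_n) \to \infty$ can be translated by elements of $G$ into a compact fundamental domain $K$, and the conjugates $g_n = h_n^{-1} g h_n$ still fix $\partial X$ pointwise, so the classical structure theory for cocompact $\mathrm{CAT}(0)$ spaces (Clifford translations arising from the Euclidean factor, as in Bridson--Haefliger or Caprace--Monod) gives a contradiction. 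Hence $g \in G_X$.

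The main obstacle is the final step $K_X \subseteq G_X$. The non-increasing-along-rays property is a direct consequence of the CAT(0) geometry, but upgrading it to a uniform bound on $d_g$ requires appealing to the fact that the pointwise stabilizer of $\partial X$ in the isometry group of a proper co-compact $\mathrm{CAT}(0)$ space consists of Clifford translations (bounded-displacement isometries coming from the canonical Euclidean factor). Once this standard structural fact is in hand, the proof closes cleanly.
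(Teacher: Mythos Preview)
Your overall strategy (the chain $G_X \subseteq G_{X_D} \subseteq K_D \subseteq K_X \subseteq G_X$) and your argument for $K_D \subseteq K_X$ via density of $\partial X_D$ in $\partial X$ are exactly what the paper does.

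The problem is your treatment of $K_X \subseteq G_X$. You correctly observe that for $g\in K_X$ the displacement $d_g$ is non-increasing along every ray, but the paragraph that follows does not actually close the argument. Translating a hypothetical sequence $x_n$ with $d_g(x_n)\to\infty$ into a compact set and passing to conjugates $g_n=h_n g h_n^{-1}$ gives you a family of isometries, each fixing $\partial X$, with $d_{g_n}$ unbounded at points of $K$; nothing so far yields a contradiction. You then invoke ``the pointwise stabilizer of $\partial X$ consists of Clifford translations'' as a black box. But a Clifford translation is precisely an isometry with bounded (indeed constant) displacement, so this statement is exactly $K_X\subseteq G_X$; you are citing the conclusion, and the sequence/conjugate discussion preceding it is superfluous.

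The paper avoids this circularity with a short elementary argument. It quotes Ontaneda's almost-geodesic-completeness result: for a proper cocompact $\mathrm{CAT}(0)$ space there is a constant $C$ such that every point $y$ lies within $C$ of some geodesic ray issuing from a fixed basepoint $x$. Your own observation that $d_g$ is non-increasing along rays then gives $d_g(z)\le d_g(x)$ for every $z$ on such a ray, and hence $d_g(y)\le 2C+d_g(x)$ for all $y\in X$. This is the missing bridge between ``non-increasing along rays'' and ``uniformly bounded on $X$''; once you insert it, your proof and the paper's coincide.
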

\begin{proof} We start with showing that $G_X$ and $K_X$ coincide.
\begin{claim}
  $G_X=K_X$.
\end{claim}
\begin{proof}[Proof of Claim]
We first show that $K_X\subseteq G_X$. Fix $g\in K_X$ and $x\in X$. As $X_D$ is unbounded, $X$ is unbounded as well, and hence non-compact, and since $G$ acts on $X$ co-compactly, use Corollary 3 in \cite{Ontaneda:boundaryofCAT0} to obtain a constant $C$ such that for each  $y\in X$ there is a geodesic ray $\gamma:[0,\infty] \rightarrow X$ with $\gamma(0) = x$ and $d(y,z) < C$ for some $z \in \mathrm{Im} (\gamma)$. We claim there exists $C'$ not depending on $y$ such that $d(y, g\cdot y)\leq C'$. We have $d(y,g\cdot y) \leq d(y,z)+d(z,g\cdot z)+d(g\cdot z,g\cdot y) \leq 2C+d(z,g \cdot z)$. Moreover, if $d(z,g \cdot z)>d(x,g \cdot x)$ then by the convexity of $\mathrm{CAT}(0)$ distance we have that $d(\gamma(t),g \cdot \gamma(t)) \rightarrow \infty$ which contradicts $ g \in K_X$. Thus, $d(z,g \cdot z)\leq d(x,g \cdot x)$ and hence $d(y,g\cdot y) \leq 2C+d(z,g \cdot z) \leq 2C+d(x,g \cdot x)$. Since $x \in X$ is fixed, $C':= 2C+d(x,g\cdot x)$ does not depend on $y\in X$. So, $g \in G_X$.

Conversely, if $g \in G_X$ then for any $\gamma:[0,\infty] \rightarrow X$, $d(\gamma(t),g \cdot \gamma(t)) < C$ for some constant $C$. Thus, $g$ fixes the point  $\gamma(\infty) \in \partial X$. So $g \in K_X$.
\end{proof}

We claim $K_X = K_D$. We note that in the case that the action of $G$ is properly discontinuous as well, the equality follows by Corollary 1.6 of \cite{baik2024kernel}. 
\\
Observe that $G$ acts co-compactly on $X$, and hence co-boundedly on $X_D$. As $X_D$ is unbounded, so an orbit of $G$ in $X_D$ is unbounded as well. So, by Theorem \ref{classification2}, $\partial X_D$ is nonempty. Thus, by Theorem \ref{theoremL}, $\partial X_D$ embeds as an  Isom $X$-invariant, dense subspace of $\partial X$.
Now, if $g \in K_X$, then $g$ fixes $\partial X$, and hence $g$ also fixes $\partial X_D \hookrightarrow \partial X$. So $g \in K_D$. Conversely, if $g \in K_D$, then $g$ fixes $\partial X_D$, which is dense in $\partial X$. Since  $\partial X$ is hausdorff and $g$ acts as a homeomorphism on $\partial X$, $g$ must fix $\partial X$ as well. Thus, $g \in K_X$.

Therefore, $G_X=K_X=K_D$. Thus, by Lemma \ref{lemma 1} we have that $G_X=G_{X_D}$.
\end{proof}

Next, we examine the structure of the coarse kernel $G_X$ of $X$ when the action of $G$ on $X$ is properly discontinuous but not, in general, co-compact. Specifically, we establish mild conditions concerning the orbit of $G$ in the curtain model $X_D$ that ensure the coarse kernel $G_X$ is virtually cyclic, and likewise finite.

Our key strategy will be to look at the action of the coarse kernel $G_{X_D}$ on the curtain model $X_D$ and apply the classification of actions on hyperbolic spaces, as discussed in Section~\ref{sec2.2}. This will allow us to deduce that the subgroup has either a bounded or quasi-line orbit in $X_D$. We can then exploit the weak acylindricity of the action of $G$ on $X_D$ to obtain the desired result. 

Notice, firstly, that for any group $G$ acting on a metric space $X$ by isometries, $G_X$ is a normal subgroup of $G$. Indeed, if $h\in G_X$, then $d(h\cdot x,x)\leq C$ for all $x\in X$. Therefore, ${d(g^{-1}h g\cdot x,x)= d(h g\cdot x,g\cdot x)\leq C}$  for all $x\in X$, implying that $g^{-1}hg \in G_X$.

\begin{theorem}
\label{Theorem C}
Let $X$ be a co-bounded $\mathrm{CAT}(0)$ space, and let $G $ act on $ X$ properly discontinuously (but not necessarily co-boundedly).  Suppose that $G$ has an unbounded orbit in $X_D$ . Then, $G_X$ is virtually cyclic.
Moreover, if an orbit of $G$ in $X_D$ is not a quasi-line either, then $G_X$ is finite.
\end{theorem}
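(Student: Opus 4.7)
The plan is to apply Gromov's classification (Theorem~\ref{classification2}) to the action of the normal subgroup $G_X$ on the roughly geodesic hyperbolic space $X_D$. Two structural facts underpin the argument: by Lemma~\ref{lemma 1}, $G_X \subseteq G_{X_D} \subseteq K_D$, so every element of $G_X$ fixes $\partial X_D$ pointwise; and by Theorem~\ref{9.51015}, $G_X$ inherits a weakly acylindrical action on $X_D$. My first preparatory step is to show $|\partial X_D| \geq 2$: since $X$ is co-bounded, Lemma~\ref{bounddistance} forces $\mathrm{Isom}(X)$ to act co-boundedly on $X_D$ as well, and the unbounded $G$-orbit hypothesis together with Lemma~\ref{horocyclicactions} excludes the bounded and horocyclic cases for $\mathrm{Isom}(X)$, leaving lineal, focal, or general type, each of which forces $|\partial X_D| \geq 2$.

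With $|\partial X_D| \geq 2$ in hand, I apply the classification to $G_X$ itself. The focal, general type, and horocyclic cases are all ruled out directly: since $G_X$ fixes $\partial X_D$ pointwise, every singleton in $\partial X_D$ is a finite $G_X$-orbit, which contradicts either the absence of finite orbits (general type) or the uniqueness of such an orbit (focal, horocyclic) demanded by Theorem~\ref{classification2}. So $G_X$ acts on $X_D$ either boundedly or lineally. In the lineal case, Lemma~\ref{linealactions} (extended to the roughly geodesic setting via the remark after Lemma~\ref{horocyclicactions}) produces a quasi-line orbit, and Proposition~\ref{Theorem D} immediately gives that $G_X$ is virtually $\Z$.

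The bounded case is where the main technical work lies. A standard circumcenter argument in a hyperbolic space provides a point $z \in X_D$ with uniformly bounded $G_X$-displacement. Combining this quasi-fixed center with the fact that $G_X$ fixes two distinct boundary points $\xi, \eta$, and using $\delta$-thinness of triangles together with the asymptotic convergence of rays sharing a common endpoint at infinity, I expect to derive a uniform bound $\sup_{g \in G_X,\, y \in \gamma} D(gy, y) < \infty$ along any bi-infinite geodesic $\gamma$ from $\xi$ to $\eta$. Weak acylindricity, applied at two points of $\gamma$ sufficiently far apart, then forces $G_X$ to be finite.

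Finally, for the \emph{moreover} clause, I would argue by contrapositive. If $G_X$ acts lineally on $X_D$ with quasi-line orbit $Q$ and limit set $\{\xi, \eta\}$, then normality of $G_X$ in $G$ makes $\{\xi, \eta\}$ a $G$-invariant set; the index-$\leq 2$ subgroup $G_0 \leq G$ fixing each of $\xi, \eta$ sends $Q$ to other $G_X$-orbits with the same endpoints at infinity, and Morse stability places all such translates within uniformly bounded Hausdorff distance of $Q$. Hence the $G$-orbit coarsely coincides with the quasi-line $Q$, contradicting the hypothesis that no $G$-orbit is a quasi-line, so $G_X$ must be bounded on $X_D$ and therefore finite. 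The main obstacle I anticipate is the bounded case: converting the quasi-fixed center together with fixed boundary behavior into a truly \emph{uniform} displacement bound along the entire geodesic $\gamma$, which is what enables the single invocation of weak acylindricity to conclude finiteness.
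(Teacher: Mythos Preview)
Your overall strategy is sound and closely parallels the paper's: both apply Gromov's classification to the action of the coarse kernel on $X_D$, use $G_X\subseteq K_D$ to rule out the focal, general-type, and horocyclic possibilities, and invoke weak acylindricity to extract finiteness. Your handling of the lineal case via Proposition~\ref{Theorem D}, and your contrapositive argument for the \emph{moreover} clause (lineal $G_X$ forces $|\partial X_D|=2$, hence $G$ itself is lineal with a quasi-line orbit), are essentially the same as what the paper does inside its Claim.

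The one genuine divergence is in the bounded case, and here you are making life much harder than necessary. You propose to manufacture a second coarsely-fixed point by running a hyperbolic-geometry argument along a bi-infinite geodesic joining two fixed boundary points: propagate the uniform displacement bound from a quasi-center $z$ out to all of $\gamma$ via Morse stability and control of the coarse translation along $\gamma$. This can be made to work, but it requires care (existence of $\gamma$ in a merely roughly geodesic space, uniform control of the shift $\tau(g)$ in terms of $D(gz,z)$), and you rightly flag it as the main obstacle. The paper bypasses all of this with a one-line normality trick: once $D(x,h\cdot x)\le C$ for every $h\in G_{X_D}$, pick any $g\in G$ with $D(x,g\cdot x)>R$ (possible since the $G$-orbit is unbounded) and observe that
\[
D(g\cdot x,\,h\cdot g\cdot x)=D(x,\,g^{-1}hg\cdot x)\le C
\]
because $g^{-1}hg\in G_{X_D}$ by normality. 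This immediately produces two points at distance $>R$ with displacement $\le C$ for every $h$, and weak acylindricity finishes. No boundary points, no bi-infinite geodesic, no Morse lemma. Since you already note that $G_X\trianglelefteq G$ and that the $G$-orbit is unbounded, you have all the ingredients for this shortcut; using it would eliminate precisely the obstacle you anticipated.

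A minor structural difference: the paper case-splits on whether the $G$-orbit in $X_D$ is a quasi-line (if so, Proposition~\ref{Theorem D} applied to $G$ gives $G$ virtually $\Z$ directly), whereas you case-split on whether the $G_X$-action is bounded or lineal. Both decompositions are valid and lead to the same conclusions.
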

\begin{proof}
By assumption, the orbit of $G$ in $X_D$ is not bounded. Firstly, we focus on the case it is not a quasi-line either.
\begin{claim}
    If an orbit of $G$ on $X_D$ is not a quasi-line, then the action of $G_{X_D}$ on $X_D$ is bounded.
\end{claim}
\begin{proof}[Proof of Claim]
Since $G_{X_D} \subseteq K_D$, we have that $G_{X_D}$ fixes the boundary of $X_D$. Thus, by Theorem \ref{classification2}, the action of $G_{X_D}$ can be either bounded, horocyclic, or lineal. Indeed, if the action of $G_{X_D}$ is focal or of general type, then $|\partial X_D| = \infty$ and $G_{X_D}$ fixes at most one point in the boundary $\partial X_D$. A contradiction.

If the action is horocyclic, then $|\partial_{X_D}(G_{X_D})| = 1$. Since each element of $\partial X_D$ is fixed by the action of $G_{X_D}$, it follows from Theorem \ref{classification2} that $|\partial X_D| = |\partial_{X_D}(G_{X_D})| = 1$.
But this is impossible since $X$, and hence $X_D$, is co-bounded. Indeed, let $H $ act co-boundedly on $ X_D$. Then, $|\partial_{X_D} H| = |\partial X_D| = 1$, implying that the action of $H$ on $X_D$ is horocyclic. This contradicts Lemma \ref{horocyclicactions}.

If the action is lineal, then $|\partial_{X_D} (G_{X_D})|= 2$. Since each element of $\partial X_D$ is fixed by the action of $G_{X_D}$, it follows from Theorem \ref{classification2} that $|\partial X_D|=|\partial_{X_D} (G_{X_D})|= 2$. Therefore, we have $2=|\partial X_D| \geq |\partial_{X_D} G| \geq |\partial_{X_D} (G_{X_D})|\geq 2$. So, $|\partial_{X_D} G|=2$, implying that the action of $G$ on $X_D$ is lineal. Hence, by Lemma \ref{linealactions}, the orbit of $G$ is a quasi-line, contradicting our assumption.\\
So the action of $G_{X_D}$ on $X_D$ is bounded.
\end{proof}
Fix $x\in X$. By the claim above, $G_{X_D}$ acts boundedly on $X_D$, so there is a constant $C>0$ so that $D(x, h\cdot x) < C$  for all $h\in G_{X_D}$.\\
Since $G$ acts properly discontinuously on $X$, by Theorem \ref{9.51015} we get that the induced action of $G$ on $X_D$ is weakly acylindrical. So let $R> 0$ satisfy that for any $x,y \in X$ with $D(x,y) > R$, only finitely many $h \in G$ have $max\{D(x, h\cdot x), D(y, h\cdot y)\} < C$. 
Now, since $G$ has unbounded orbits in $X_D$, we can always find a point $y$ in the orbit $G\cdot x$ with $D(x,y) > R$. Let $y = g\cdot x$ be such a point. Then, $$D(y, h\cdot y) = D(g\cdot x, h\cdot (g\cdot x))=D(g^{-1} h g\cdot x, x) < C \text{ } \forall h\in G_{X_D}.$$
The last inequality follows because $G_{X_D}\trianglelefteq G$, and thus $g^{-1} h g \in G_{X_D}$.\\
So, $max\{D(x, h\cdot x), D(y, h\cdot y)\} < C$ for all $h\in G_{X_D}$, and $D(x,y) > R$. Hence, $G_{X_D}$ must be finite. So $G_X\subseteq G_{X_D}$ is finite as well.

Now, suppose that the orbit of $G$ in $X_D$ is a quasi-line. From Theorem \ref{9.51015}, we have that $X_D$ is roughly geodesic and $G$ acts weakly acylindrically on $X_D$. So, by Proposition \ref{Theorem D}, $G$ is virtually $\Z$. Hence, $G_X \subseteq G$ is virtually cyclic.
\end{proof}

Finally, using Theorem \ref{Theorem C} above alongside Theorem \ref{Theorem A}, we obtain the following results regarding the algebraic structure of the coarse kernel $G_X$ for a group $G$ that acts geometrically on a $\mathrm{CAT}(0)$ space $X$. Consequently, we prove Corollary \ref{corthmB} characterizing when the coarse kernels $G_X$ and $G_{X_D}$ coincide.

\begin{theorem}
\label{Theorem C'}
Let $X$ be a $\mathrm{CAT}(0)$ space, and let $G $ act on $ X$ geometrically. Then,\\
    (i) $G_X$ is virtually $\Z^n$ for some $n \in \Z_{\geq 0}$.\\
    (ii) If moreover $X_D$ is unbounded, then $G_X$ is virtually cyclic.\\
    (iii) If furthermore $X_D$ is not a quasi-line, then $G_X$ is the largest finite normal subgroup of $G$.
\end{theorem}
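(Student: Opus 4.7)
The plan is to derive (ii) and (iii) as near-immediate consequences of Theorem~\ref{Theorem C}, and to handle (i) by splitting on whether $X_D$ is bounded. Since $G$ acts geometrically on $X$, it acts cocompactly on $X$ and hence coboundedly on $X_D$. Thus if $X_D$ is unbounded, $G$ has unbounded orbits in $X_D$ and Theorem~\ref{Theorem C} directly yields $G_X$ virtually cyclic, proving (ii); if additionally $X_D$ is not a quasi-line, Theorem~\ref{Theorem C} yields $G_X$ finite. For the ``largest finite normal subgroup'' statement in (iii), normality of $G_X$ in $G$ was recorded before Theorem~\ref{Theorem C}, and for any finite normal $N \trianglelefteq G$ I would pick a bounded fundamental domain $K$ (using cocompactness); writing a generic $y \in X$ as $y = g \cdot k$ with $k \in K$, the calculation
\[
d(n \cdot y, y) = d(g^{-1} n g \cdot k, k) \leq \operatorname{diam}\Bigl( \bigcup_{n' \in N} n' \cdot K \Bigr)
\]
is uniformly bounded (using $g^{-1} n g \in N$ by normality, and finiteness of $N$ together with boundedness of $K$), giving $N \subseteq G_X$.

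For (i), if $X_D$ is unbounded we are done by (ii), so assume $X_D$ is bounded. I would use Corollary~\ref{cortheoremA} to identify $G_X$ with the FC-center of $G$: every $g \in G_X$ lies in some $Z(H)$ with $|G/H| < \infty$, hence $C_G(g)$ has finite index in $G$. Given a finitely generated $L = \langle g_1, \dots, g_k \rangle \leq G_X$, the intersection $H_L := \bigcap_i C_G(g_i)$ has finite index in $G$ and centralizes $L$, so $L' := L \cap H_L$ is a finite-index abelian subgroup of $L$. Being a finitely generated abelian subgroup of the CAT(0) group $G$, $L'$ is virtually $\Z^{n_L}$ with $n_L$ bounded by some uniform $r$ (the maximal rank of an abelian subgroup of $G$, finite via the Flat Torus / Solvable Subgroup Theorem). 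Pick $A \cong \Z^{n_0} \leq G_X$ realising this maximal rank; running the same argument on $\langle A, g \rangle$ for arbitrary $g \in G_X$ shows that $g^m$ commutes with a finite-index subgroup of $A$ for some $m$, so the conjugation map $G_X \to \operatorname{Aut}(A) \cong GL_{n_0}(\Z)$ has torsion image, and is therefore finite by Minkowski's theorem. Let $C_{G_X}(A) \trianglelefteq G_X$ be its kernel; then $C_{G_X}(A)$ has finite index in $G_X$ and contains $A$ centrally. A Flat Torus Theorem argument applied to $A$ supplies a $C_G(A)$-invariant convex subspace $Y \cong Y' \times \R^{n_0}$ with $A$ acting cocompactly by translation on the $\R^{n_0}$ factor; combining bounded displacement of elements of $C_{G_X}(A)$ with proper discontinuity yields that $C_{G_X}(A)/A$ is finite, hence $G_X$ is virtually $\Z^{n_0}$.

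The main obstacle is this final step of (i) when $X_D$ is bounded: proving $C_{G_X}(A)/A$ is finite. The Flat Torus decomposition lets each $g \in C_{G_X}(A)$ be written as a pair (isometry of $Y'$, translation of $\R^{n_0}$), with the translation image containing $A$ as a lattice. One then needs bounded displacement on $X$ together with proper discontinuity to show both that the image of $C_{G_X}(A)$ in the translation group $\R^{n_0}$ is discrete (hence a finite-index extension of $A$), and that the kernel of the translation map consists of bounded-displacement isometries of $Y'$ coming from $G$, which proper discontinuity forces to be finite. Once this is established, (ii) and (iii) arise as clean sharpenings of (i) obtained directly from Theorem~\ref{Theorem C}.
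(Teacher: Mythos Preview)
Your treatment of parts (ii) and (iii) is correct and essentially identical to the paper's: both derive them from Theorem~\ref{Theorem C} once one observes that a geometric action gives cobounded (hence quasi-dense) orbits in $X_D$, and both handle the ``largest finite normal subgroup'' clause by showing that any finite normal $N$ has uniformly bounded displacement on $X$. Your fundamental-domain computation and the paper's quasi-dense-orbit computation are interchangeable.

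For part (i), however, your route diverges substantially from the paper's and carries two genuine gaps. First, the conjugation map $G_X \to \operatorname{Aut}(A)$ is not well-defined unless $A$ is normal in $G_X$, which you have not established. Your rank-maximality argument shows only that each $g \in G_X$ \emph{commensurates} $A$, so at best you get a map to $GL_{n_0}(\Q)$; its kernel is then not $C_{G_X}(A)$ but rather the set of elements centralizing \emph{some} finite-index subgroup of $A$, and this subgroup varies with the element. Second, in your Flat Torus endgame you invoke ``bounded displacement plus proper discontinuity'' to force $C_{G_X}(A)/A$ finite, but the displacement bound for $g\in G_X$ depends on $g$: proper discontinuity gives finiteness only of the set of elements moving a fixed point by at most a \emph{fixed} constant. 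Without uniformity, neither discreteness of the translation image nor finiteness of the kernel follows directly. (One could try to rescue the argument by showing that an infinite $C_{G_X}(A)/A$ would produce an abelian subgroup of rank $n_0+1$, but this requires ruling out infinite torsion in the quotient, which is additional work.)

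The paper avoids all of this by a much shorter device: from $G_X=\bigcup\{C_G(H'):|G/H'|<\infty\}$ (Theorem~\ref{Theorem A}), it shows that if the union is not realised by a single $C_G(H)$ one can build a strictly ascending chain $C_G(H_1)\subsetneq C_G(H_2)\subsetneq\cdots$ of virtually abelian subgroups of the $\mathrm{CAT}(0)$ group $G$, contradicting the ascending chain condition for such subgroups (Bridson--Haefliger, II.7.5). Hence $G_X=C_G(H)$ for one finite-index $H$, and then $Z(H)$ is a finite-index, finitely generated abelian subgroup of $C_G(H)$, giving virtually $\Z^n$ immediately. Note in particular that the paper proves (i) with no case split on $X_D$; your reduction of the unbounded-$X_D$ case to (ii) is valid but unnecessary.
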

\begin{proof}
(i) Since $G$ acts geometrically on $X$, we can apply Theorem \ref{Theorem A} to $Y=X$, yielding $G_X = \bigcup \{C_G(H) \mid |G/H|<\infty \}$. Assume that this infinite union cannot be realized as the centraliser of a single finite index subgroup.\\
Then, we may inductively choose finite index subgroups $H_i$,  $i \in \Z$ as follows:-\\
 Pick $H_1 = G$. Suppose we have chosen finite index subgroups $H_1,H_2,...,H_n$ of $G$ so that we get proper inclusions $H_n \subset H_{n-1} \subset ... \subset H_1$, and ${C_G(H_1) \subset C_G(H_{2}) \subset ... \subset C_G(H_{n})}$. Now, there must exist a finite index subgroup $K_{n+1}$ of $G$ so that $C_G(K_{n+1}) \not \subseteq C_G(H_n)$. 
If not, then we would have $\bigcup \{C_G(H) \mid |G/H|<\infty \} = C_G(H_n)$, contradicting our assumption.
Let $H_{n+1} = K_{n+1} \cap H_n$. Since $H_n$ and $K_{n+1}$ are finite index subgroups of $G$, so is $H_{n+1}$. Observe also that $H_{n+1}$ is a proper subgroup of $H_n$ because ${C_G(K_{n+1}) \not\subseteq C_G(H_n)}$ and $C_G(K_{n+1})\subseteq C_G(H_{n+1})$. In particular, we obtain proper chains $H_{n+1} \subset H_n \subset ... \subset H_1$, and  $C_G(H_1) \subset C_G(H_2) \subset ... \subset C_G(H_{n+1})$. 

Hence, we have an infinite chain $C_G(H_1) \subset C_G(H_2) \subset \cdot \cdot \cdot$ of subgroups of $G$. Moreover, all of these are virtually abelian. Indeed, note that the center $Z(H_i)$ of $H_i$ is an abelian subgroup of $C_G(H_i)$ and we have 
\begin{equation}
\label{e2}
    |C_G(H_i)/Z(H_i)|= |C_G(H_i)/{C_G(H_i)\cap H_i}|= |{C_G(H_i) \cdot H_i}/H_i| \leq |G/H_i| 
\end{equation}
where $|G/H_i|$ is finite as $H_i$ is a finite index subgroup of $G$. \\(The third equality in equation \ref{e2} follows from the second isomorphism theorem applied to the group $N_G(H_i)$ with subgroups $C_G(H_i),H_i \trianglelefteq N_G(H_i)$ ) \\
However, this contradicts the Ascending Chain Condition [\cite{bridsonhaefliger}, Theorem 7.5].

So, $G_X = C_G(H)$ for some finite index subgroup $H$ of $G$. Now, from equation \ref{e2}, we know $Z(H)$ is a finite index subgroup of $C_G(H)$. Also, $Z(H)$ is an abelian subgroup of $G$ and hence, by Corollary 7.6 of \cite{bridsonhaefliger}, must be finitely generated. Thus, $Z(H)$, and hence $C_G(H) = G_X$, is virtually $\Z^n$ for some $n$.

(ii) As $G$ acts on $X$ geometrically, so $X$ is co-bounded. Also, the orbit of $G$ is quasi-dense in $X$ and hence the corresponding orbit in $X_D$ is quasi-dense as well. As $X_D$ is unbounded, we have that the orbit of $G$ in $X_D$ is unbounded. Therefore, by Theorem \ref{Theorem C}, $G_X$ is virtually cyclic.

(iii) Now, if $X_D$ is not a quasi-line then the orbit of $G$ in $X_D$ isn't a quasi-line either. So, by Theorem \ref{Theorem C}, $G_X$ is finite.
Also, $G_X$ is a normal subgroup of $G$.
Moreover, if $N$ is a finite normal subgroup of $G$, then $N$ acts uniformly boundedly on an orbit $G\cdot x_0$, which is quasi-dense in $X$. So $N$ acts uniformly boundedly on $X$, implying that $N \subseteq G_X$. 
Thus, $\bigcup \{N|N \text{ is a finite normal subgroup of }G\} \subseteq G_X =$ finite normal subgroup.
Thus, $G_X$ must be the largest finite normal subgroup of $G$.
\end{proof}

\begin{cor}
\label{cortheoremB}  
Let $G$ be a group acting geometrically on a $\mathrm{CAT}(0)$ space $X$. Then exactly one of the following holds.
    \begin{enumerate}
        \item $X_D$ is unbounded and hence $G_X = G_{X_D}$;
        \item  $X_D$ is bounded, $|G:Z(G)|<\infty$, and $G_X = G = G_{X_D}$; 
        \item $X_D$ is bounded, $Z(G)$ has infinite index in $G$, and $G_X \neq G_{X_D}$.
    \end{enumerate}
\end{cor}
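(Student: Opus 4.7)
The plan is to split on whether $X_D$ is bounded. If $X_D$ is unbounded, we are immediately in case (1), and the equality $G_X = G_{X_D}$ is a direct application of Theorem \ref{Theorem B}. I would first verify that a geometric action of $G$ on a $\mathrm{CAT}(0)$ space $X$ ensures that $X$ is proper, as required by Theorem \ref{Theorem B}.

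Suppose now that $X_D$ is bounded. Then $D(x, g\cdot x) \leq \operatorname{diam}(X_D)$ for every $g \in G$ and every $x \in X$, so $G_{X_D} = G$ trivially. The corollary therefore reduces to the dichotomy $G_X = G \iff [G:Z(G)] < \infty$, which is precisely what distinguishes case (2) from case (3).

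For the implication $[G:Z(G)] < \infty \Rightarrow G_X = G$, I would argue via Corollary \ref{cortheoremA}. Given $g \in G$, let $H := \langle Z(G) \cup \{g\}\rangle$. Then $H$ contains $Z(G)$, which has finite index in $G$, so $H$ itself has finite index; moreover $g$ commutes with itself and with every element of $Z(G)$, hence with all generators of $H$ and therefore with $H$, giving $g \in Z(H) \subseteq G_X$.

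Conversely, assume $G_X = G$. I would invoke the ACC argument from the proof of Theorem \ref{Theorem C'}(i), which shows that the ascending union $\bigcup \{C_G(H) : [G:H] < \infty\}$ stabilises at a single centralizer, so that $G_X = C_G(H_0)$ for some finite-index $H_0 \leq G$. The equality $C_G(H_0) = G$ then forces $H_0 \leq Z(G)$, hence $[G:Z(G)] \leq [G:H_0] < \infty$, contradicting the hypothesis of case (3) and closing the dichotomy. The only mild subtlety I anticipate is the verification that $X$ is proper for the application of Theorem \ref{Theorem B}; everything else is essentially a matter of organising the algebraic consequences already secured by Corollary \ref{cortheoremA} and the proof of Theorem \ref{Theorem C'}(i).
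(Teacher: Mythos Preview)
Your proposal is correct and follows essentially the same route as the paper: split on boundedness of $X_D$, invoke Theorem~\ref{Theorem B} in the unbounded case, and in the bounded case reduce to the equivalence $G_X=G \iff [G:Z(G)]<\infty$, using the stabilisation $G_X=C_G(H_0)$ from the proof of Theorem~\ref{Theorem C'}(i) for one direction. The only cosmetic difference is that for $[G:Z(G)]<\infty \Rightarrow G_X=G$ the paper observes directly that $G=C_G(Z(G))\subseteq G_X$ rather than building $H=\langle Z(G)\cup\{g\}\rangle$; your caution about properness of $X$ is reasonable and is a point the paper passes over silently.
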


\begin{proof}
    If $X_D$ is unbounded, then by Theorem \ref{Theorem C}, $G_X = G_{X_D}$. Assuming $X_D$ is bounded, we get $G_{X_D}=G$. Therefore, $G_X=G_{X_D}$ if and only if $G=G_X$.
    
    Now, by the proof of Theorem \ref{Theorem C'} part (i), we have that $G_X=C_G(H)$ for some finite index subgroup $H$ of $G$. Thus, if $G=G_X$, then $G=C_G(H)$, and hence $H\subseteq Z(G)$. This implies that $|G:Z(G)|\leq |G:H|<\infty$. Conversely, if $|G:Z(G)|<\infty$, then $G=C_G(Z(G))\subseteq G_X$. Hence, $G_X=G$.
\end{proof}

\bibliographystyle{alpha}
\bibliography{Bibliography}

\begin{thebibliography}{CCMT15}

\bibitem[BH99]{bridsonhaefliger}
Martin~R. Bridson and Andr\'{e} Haefliger.
\newblock {\em Metric spaces of non-positive curvature}, volume 319 of {\em
  Grundlehren der mathematischen Wissenschaften [Fundamental Principles of
  Mathematical Sciences]}.
\newblock Springer-Verlag, Berlin, 1999.

\bibitem[BJ24]{baik2024kernel}
Hyungryul Baik and Wonyong Jang.
\newblock On the kernel of actions on asymptotic cones.
\newblock {\em arXiv preprint arXiv:2402.09969}, 2024.

\bibitem[CCMT15]{caprace:amenable}
Pierre-Emmanuel Caprace, Yves Cornulier, Nicolas Monod, and Romain Tessera.
\newblock Amenable hyperbolic groups.
\newblock {\em J. Eur. Math. Soc. (JEMS)}, 17(11):2903--2947, 2015.

\bibitem[GO07]{Ontaneda:boundaryofCAT0}
Ross Geoghegan and Pedro Ontaneda.
\newblock Boundaries of cocompact proper {${\rm CAT}(0)$} spaces.
\newblock {\em Topology}, 46(2):129--137, 2007.

\bibitem[LV23]{Leitner2023}
Arielle Leitner and Federico Vigolo.
\newblock {\em Coarse Kernels}, pages 119--134.
\newblock Springer Nature Switzerland, Cham, 2023.

\bibitem[PSZ24]{PETYT2024109742}
Harry Petyt, Davide Spriano, and Abdul Zalloum.
\newblock Hyperbolic models for cat(0) spaces.
\newblock {\em Advances in Mathematics}, 450:109742, 2024.

\bibitem[Rua01]{Ruane:actiononboundary}
Kim~E. Ruane.
\newblock Dynamics of the action of a {${\rm CAT}(0)$} group on the boundary.
\newblock {\em Geom. Dedicata}, 84(1-3):81--99, 2001.

\end{thebibliography}

\end{document}